\newcommand{\lang}{\mathcal{L}}
\newcommand{\limp}{\longrightarrow}
\newcommand{\pow}{\mathcal{P}}
\newcommand{\hollowstar}{\text{\ding{73}}}
\newcommand{\fstar}{\text{\ding{72}}}
\newcommand{\ci}{\mathbf{Ci}}
\newcommand{\mbd}{\mathbf{mbD}}
\newcommand{\mbst}{\mathbf{mb}^\fstar}
\newcommand{\vdashs}{\vdash_{\mathcal{S}}}
\newcommand{\vdashmb}{\vdash_{\mbst}}
\newcommand{\modelsmb}{\models_{\mbst}}
\newcommand{\cneg}{\sim}
\newcommand{\probsem}{\Vdash_\mathbb{P}}
\newcommand{\diam}{\blacklozenge}
\theoremstyle{definition}
\newtheorem{theorem}{Theorem}[section]
\newtheorem{lemma}[theorem]{Lemma}
\newtheorem{corollary}[theorem]{Corollary}
\newtheorem{definition}[theorem]{Definition}
\newtheorem{remark}[theorem]{Remark}
\newtheorem{example}[theorem]{Example}
  \title{Paracomplete Probabilities\footnote{The final version of this paper has been submitted for publication.}}
  \author[1]{Sankha S. Basu}
  \author[1]{Esha Jain}
  \date{
  August 06, 2026}
  \affil[1]{Department of Mathematics\\
  Indraprastha Institute of Information Technology-Delhi\\
  New Delhi, India.}
\begin{document}
%
\maketitle              

\begin{abstract}
This paper presents an advance in the direction of working with probabilities in a paracomplete setting using Logics of Formal Undeterminedness (LFUs). The undeterminedness is interpreted here as missing evidence. A theorem of total paracomplete probability and a paracomplete Bayes' rule have been proved using this setup. We end with a definition of a paracomplete probability space illustrating a way to define probabilities on sets in the presence of undeterminedness.
\end{abstract}

\textbf{Keywords:} Paracompleteness; Probability; Logics of Formal Underterminedness (LFUs).

\section{Introduction}
A logic $\langle\lang,\vdash\rangle$ is said to be \emph{paracomplete} if the \emph{law of excluded middle (LEM)} fails in it, i.e., there exists $\alpha\in\lang$ such that $\not\vdash\alpha\lor\neg\alpha$. In a paracomplete logic, two propositions $\alpha$ and $\neg\alpha$ can both be false. A paracomplete logic is dual to a paraconsistent logic, where the \emph{law of explosion ECQ (ex contradictione sequitor quodlibet)} fails, i.e., there exists $\alpha,\beta\in\lang$ such that
$\{\alpha,\neg\alpha\}\nvdash\beta$. There are more ways of describing paracompleteness (see \cite{daCosta1986}). Intuitionistic propositional logic (IPC) is an example of a paracomplete logic.

A \emph{logic of formal undeterminedness (LFU)} is a paracomplete logic that has a primitive or defined unary \emph{determinedness} operator, usually denoted by $\hollowstar$, that can recover LEM in a paracomplete setting. Intuitively speaking, in an LFU $\langle\lang,\vdash\rangle$ $\hollowstar\alpha$ is meant to assert that ``$\alpha$ is determined,'' and hence, in the presence of $\hollowstar\alpha$, either $\alpha$ or $\neg\alpha$ will be the case. In other words, $\hollowstar\alpha\vdash\alpha\lor\neg\alpha$, for all $\alpha\in\lang$. A unary \emph{undeterminedness} operator, usually denoted by $\fstar$, can then be defined as the classical negation of $\hollowstar$, i.e., for any $\alpha\in\lang$, $\fstar\alpha:=\,\cneg\hollowstar\alpha$ (`$\cneg$' being the classical negation operator). Alternatively, an LFU can be described with an undeterminedness operator in the language and the determinedness operator can be defined in terms of it using the classical negation. The interdefinability between the determinedness and undeterminedness operators is, of course, only possible if a classical negation is available or definable. Intuitively speaking, in an LFU $\langle\lang,\vdash\rangle$, $\fstar\alpha$ is meant to assert that ``there is missing evidence about $\alpha$,'' or equivalently, ``$\alpha$ is undetermined.'' A formal definition of an LFU in terms of determinedness can be found in \cite{marcos2005nearly,Szmuc2016} and also in \cite{carnielli2020recovery}. We have included a formal definition of an LFU described with an undeterminedness operator $\fstar$ in Section 2; this is adapted from the one in \cite{carnielli2020recovery}.

LFUs were introduced in \cite{marcos2005nearly} as dual to the \emph{logics of formal inconsistency (LFIs)}. However, as mentioned in \cite{carnielli2020recovery}, the idea can be traced back to \cite{daCosta1986}. A \emph{logic of formal inconsistency (LFI)} is a paraconsistent logic that has a primitive or defined unary consistency operator, usually denoted by $\circ$, which is used to recover the principle of explosion in a paraconsistent setting. Intuitively speaking, in an LFI $\langle\lang,\vdash\rangle$, for any $\alpha\in\lang$, $\circ\alpha$ is meant to assert that ``$\alpha$ is consistent,'' and hence, $\{\alpha,\neg\alpha,\circ\alpha\}\vdash\beta$, for all $\beta\in\lang$ (see \cite{CarnielliConiglio2016,Carnielli2007,Carnielli2015} for more on LFIs).

In an LFI or LFU, the semantic values $T/F$ or $1/0$ are not  used to convey truth and falsity, but rather the presence and absence of evidence, respectively. 

The system $\ci$ is an LFI that is used in \cite{bueno2016paraconsistent} to define a generalized notion of probability and prove a theorem of total probability for the contradictory evidence case. In this paper, we explore the dual case of missing evidence using an LFU and follow the work in \cite{bueno2016paraconsistent} to generalize the theorem of total probability for this case. In order to do this, we introduce a logic that we call $\mbst$ with a primitive unary undeterminedness operator $\fstar$. A semantics for this logic and the soundness and completeness results with respect to this semantics are also presented. The logic $\mbst$, however, turns out to be the LFU $\mbd$ (see \cite{carnielli2020recovery}) in disguise which was described with a primitive unary determinedness operator $\hollowstar$. 
 
We next define probability functions that can be used to provide a semantics for $\mbst$. The completeness of $\mbst$ with respect to this probability semantics is then established using the completeness result mentioned above. In the final section of the article, we have defined a paracomplete probability space in an attempt to handle the concept of probabilities on sets in the presence of undeterminedness. This work can also be seen in connection with some other work on probability functions relative to intuitionistic logic (not LFUs) that were developed in \cite{morgan1983probabilistic} and \cite{weatherson2003classical}.

\section{The logic \texorpdfstring{$\mbst$}{mb*}}

\subsection{LFUs formally}
Let $\lang$ be the set of formulas generated inductively over a denumerable set of variables $V$ using a finite set of connectives or operators, called the \emph{signature}, $\Sigma$. A logic with the signature $\Sigma$ is then a pair $\mathcal{S}=\langle\lang,\vdashs\rangle$, where $\vdashs\,\subseteq\pow(\lang)\times\lang$ is the consequence relation of the logic $\mathcal{S}$.

\begin{definition}
    A logic $\mathcal{S}=\langle\lang,\vdashs\rangle$ is called \emph{Tarskian} if it satisfies the following properties. For every $\Gamma\cup\Delta\cup\{\alpha\}\subseteq\lang$,
    \begin{enumerate}[label=(\roman*)]
        \item if $\alpha\in\Gamma$, then $\Gamma\vdashs\alpha$ (\emph{Reflexivity});
        \item if $\Gamma\vdashs\alpha$ and $\Gamma\subseteq\Delta$, then $\Delta\vdashs\alpha$ (\emph{Monotonicity});
        \item if $\Delta\vdashs\alpha$ and $\Gamma\vdashs\beta$ for all $\beta\in\Delta$, then $\Gamma\vdashs\alpha$ (\emph{Transitivity/ Cut}).
    \end{enumerate}
    $\mathcal{S}$ is said to be \emph{finitary} if for every $\Gamma\cup\{\alpha\}\subseteq\lang$, $\Gamma\vdashs\alpha$ implies that there exists a finite $\Gamma_0\subseteq\Gamma$ such that $\Gamma_0\vdashs\alpha$.

    $\mathcal{S}$ is said to be \emph{structural} if for every $\Gamma\cup\{\alpha\}\subseteq\lang$, $\Gamma\vdashs\alpha$ implies that $\sigma[\Gamma]\vdashs\sigma(\alpha)$, for every substitution $\sigma$ of formulas for variables.
\end{definition}

\begin{remark}
    The set of formulas $\lang$ can also be described as the formula algebra over $V$ of some type/ signature. The formula algebra has the universal mapping property for the class of all algebras of the same type as $\lang$ over $V$, i.e., any function $f:V\to A$, where $A$ is the universe of an algebra $\mathbf{A}$ of the same type as $\mathcal{L}$, can be uniquely extended to a homomorphism from $\lang$ to $\mathbf{A}$ (see \cite{FontJansanaPigozzi2003,Font2016} for more details).
    
    A \emph{substitution} can then be defined as any function $\sigma:V\to\lang$ that extends to a unique endomorphism (also denoted by $\sigma$) from $\lang$ to itself via the universal mapping property. The logic $\mathcal{S}$ is then defined to be structural as above.
\end{remark}

\begin{definition}\label{def:LFU}
    Let $V$ be as before, $\Sigma=\{\land,\lor,\limp,\neg,\fstar\}$ be a signature, where $\neg,\fstar$ are unary and the rest are binary connectives. Suppose $\mathcal{S}=\langle\lang,\vdashs\rangle$ is a Tarskian, finitary and structural logic where $\lang$ is the formula algebra generated over $V$ with the signature $\Sigma$. Moreover, suppose $\lor$ enjoys the following standard property of a disjunction. For any $\Gamma\cup\{\alpha,\beta,\gamma\}\subseteq\lang$, 
    \[
    \Gamma\cup\{\alpha\}\vdashs\gamma\hbox{ and }\Gamma\cup\{\beta\}\vdashs\gamma\hbox{ iff }\Gamma\cup\{\alpha\lor\beta\}\vdashs\gamma.
    \]
    $\mathcal{S}$ is then called an \emph{LFU (with respect to $\neg$ and $\fstar$)} if the following conditions hold.
    \begin{enumerate}[label=(\roman*)]
        \item $\not\vdashs\alpha\lor\neg\alpha$, for some $\alpha\in\lang$.
        \item There exists $\alpha\in\lang$ such that
        \begin{enumerate}
            \item $\not\vdashs\alpha\lor\fstar\alpha$, and
            \item $\not\vdashs\neg\alpha\lor\fstar\alpha$.
        \end{enumerate}
        \item $\vdash\alpha\lor\neg\alpha\lor\fstar\alpha$, for all $\alpha\in\lang$.
    \end{enumerate}
\end{definition}

\begin{remark}
    The above definition of an LFU is adapted from the one in \cite{carnielli2020recovery}, where an LFU is defined using a signature containing a determinedness operator $\hollowstar$. The operator $\fstar$ in the signature used here is, on the other hand, the undeterminedness operator.
\end{remark}

\subsection{Syntax of the logic \texorpdfstring{$\mbst$}{mb*}}\label{subsec:syntax}
The logic $\mbst=\langle\lang,\vdashmb\rangle$, where $\lang$ is the formula algebra generated over a denumerable set of variables $V$ using a signature $\Sigma=\{\land,\lor,\limp,\neg,\fstar\}$, is the logic induced by the following Hilbert-style presentation. 

\textbf{Axiom Schema:}

\begin{enumerate}
    \item $\alpha\limp(\beta\limp\alpha)$
    \item $(\alpha\limp\beta)\limp((\alpha\limp(\beta\limp\gamma))\limp(\alpha\limp\gamma))$
    \item $\alpha\limp (\beta\limp(\alpha\land\beta))$
    \item $(\alpha\land\beta)\limp\alpha$
    \item $(\alpha\land\beta)\limp\beta$
    \item $\alpha\limp(\alpha\lor\beta)$
    \item $\beta\limp(\alpha\lor\beta)$
    \item $(\alpha \limp\gamma)\limp((\beta\limp\gamma)\limp((\alpha\lor\beta)\limp\gamma))$
    \item $\alpha\lor(\alpha\limp\beta)$
    \item $\alpha\limp(\neg\alpha\limp\beta)$ (explosion)
    \item $\alpha\lor\neg\alpha\lor\fstar\alpha$ (\emph{included middle})
\end{enumerate}

\textbf{Inference Rule} 

$\dfrac{\alpha,\;\alpha\limp\beta}{\beta}$ [Modus ponens (MP)]

\begin{remark}\label{rem:axioms}
    The axioms (1)--(9) in the above Hilbert-style presentation of $\mbst$ are the axioms of positive classical propositional logic (CPL$^+$). As mentioned earlier, the logic $\mbst$ is the logic $\mbd$ in disguise, which was defined using a signature containing a determinedness operator $\hollowstar$ in \cite{carnielli2020recovery}. The explosion axiom, Axiom (10), is also an axiom of $\mbd$. The axiom of included middle, Axiom (11), is a variant of the axiom of `gentle excluded middle (GPEM)' of $\mbd$.
\end{remark}

The fact that $\mbst$ is induced by a Hilbert-style presentation, i.e., it is a Hilbert-style logic is captured in the following definitions of \emph{syntactic derivation} and \emph{syntactic entailment} in $\mbst$.

\begin{definition} 
Let $\Gamma\cup\{\varphi\}\subseteq\lang$. A \emph{derivation} of $\varphi$ from $\Gamma$ in $\mbst$ is a finite sequence $(\varphi_{1},\ldots,\varphi_{n})$ of elements in $\lang$, where $\varphi_{n}=\varphi$ and for each $1\le i\le n$, 

\begin{enumerate}[label=(\roman*)]
    \item $\varphi_{i}$ is an instance of an axiom of $\mbst$, or
    \item $\varphi_{i}\in \Gamma$, or
    \item there exist $1\le j,k<i$ such that $\varphi_{i}$ is obtained from $\varphi_{j},\varphi_{k}$ by MP.
\end{enumerate}
We say that $\varphi$ is \emph{syntactically derivable} or \emph{syntactically entailed} from $\Gamma$, and write $\Gamma\vdashmb\varphi$, if there is a derivation of $\varphi$ from $\Gamma$.
\end{definition}

\begin{remark}\label{rem:mbst-Tarskian/finitary}
    It follows from the above definition of a syntactic consequence in $\mbst$, that $\mbst$ is a Tarskian, finitary and structural logic. 
\end{remark}

\begin{theorem}\label{thm:DedThm}
    The Deduction theorem holds in $\mbst=\langle\lang,\vdashmb\rangle$, i.e., for any $\Gamma\cup\{\alpha,\beta\}\subseteq\lang$, $\Gamma\cup\{\alpha\}\vdashmb\beta$ iff $\Gamma\vdashmb\alpha\limp\beta$.
\end{theorem}

\begin{proof}
This is standard in the presence of Axioms 1 and 2 and MP as the only rule of inference.
\end{proof}

\subsection{Semantics for the logic \texorpdfstring{$\mbst$}{mb*}}\label{subsec:semantics}
\begin{definition}\label{def:semantics}
Let $\lang$ be the set of formulas of $\mbst$ as described above. A function $v:\lang\to\{0,1\}$ is an \emph{$\mbst$-valuation} if it satisfies the following conditions.
\begin{enumerate}[label=(\roman*)]
    \item $v(\alpha\lor\beta)=1$ iff $v(\alpha)=1$ or $v(\beta)=1$.
    \item $v(\alpha \land\beta)=1$ iff $v(\alpha)=1$ and $v(\beta)=1$.
    \item $v(\alpha\limp\beta)=1$ iff $v(\alpha)=0$ or $v(\beta)=1$.
    \item If $v(\alpha)=1$, then $v(\neg\alpha)=0$.
    \item If $v(\alpha)=0=v(\neg\alpha)$, then $v(\fstar\alpha)=1$.
\end{enumerate} 
\end{definition}

\begin{remark}\label{rem:val}
We note that the conditions (i)--(iii) above are exactly the truth conditions for the classical connectives $\land,\lor,\limp$. However, $\mbst$-valuations are not truth functional due to conditions (iv) and (v). Thus, any $\mbst$-valuation behaves like a classical valuation for $\neg\,$- and $\fstar\,$- free formulas, i.e., for all CPL$^+$-formulas.
\end{remark}

\begin{definition}
Suppose $\Gamma\cup\{\varphi\}\subseteq\lang$, where $\lang$ is the set of formulas of $\mbst$. Then, we say that $\Gamma$ \emph{(semantically) entails} $\varphi$, and write $\Gamma\modelsmb\varphi$, if for every $\mbst$-valuation $v$, $v(\varphi)=1$ whenever $v(\gamma)=1$ for every $\gamma\in\Gamma$.

A formula $\varphi$ is an \emph{$\mbst$-tautology}, if $v(\varphi)=1$ for every $\mbst$-valuation $v$.
\end{definition}

\begin{remark}
    As mentioned in \cite{Carnielli2015}, the truth values 1 and 0 are not to be read as true and false, respectively, but instead as presence and absence of evidence, respectively. Thus, for any formula $\alpha$ and $\mbst$-valuation $v$,
    \begin{itemize}
        \item $v(\alpha)=1$ means there is  evidence that $\alpha$ is true;
        \item $v(\alpha)=0$ means there is no evidence that $\alpha$ is true;
        \item $v(\neg\alpha)=1$ means there is  evidence that $\alpha$ is false;
        \item $v(\neg\alpha)=0$ means there is no evidence that $\alpha$ is false; and
        \item $v(\fstar\alpha)=1$ means evidence is missing for $\alpha$.  
    \end{itemize}

    It may be noted that it is possible to have $v(\alpha)=1=v(\fstar\alpha)$ or $v(\neg\alpha)=1=v(\fstar\alpha)$. In the first case, there is evidence for the truth of $\alpha$ but the evidence is not complete or conclusive, and hence $v(\fstar\alpha)=1$. Similarly, in the second situation, there is evidence for the falsity of $\alpha$, but there is still some ambiguity. See Example \ref{exm:paracompleteprob} for a real life scenario.
\end{remark}

We can write truth tables for the formulas of $\mbst$ as exemplified below. These are, of course, of non-deterministic character whenever a formula involves $\neg$ or $\fstar$.

\begin{enumerate}[label=(\arabic*)]
\item $\begin{array}{|c|c|c|c|c|c|}
\hline
\alpha&\neg\alpha&\alpha\land\neg\alpha&\neg\neg\alpha&\alpha\limp\neg\neg\alpha& \neg\neg\alpha\limp\alpha\\
\hline
   1&0&0&1&1&1\\\cline{4-6}
   &&&0&0&1\\
   \hline
   0&1&0&0&1&1\\\cline{2-6}
   &0&0&1&1&0\\\cline{4-6}
   &&&0&1&1\\
   \hline
\end{array}$

\item $\begin{array}{|c|c|c|c|c|c|}
\hline
\alpha&\neg\alpha&\fstar\alpha&\alpha\lor\neg\alpha&\neg(\alpha\lor\neg\alpha) &\neg(\alpha\lor\neg\alpha)\limp\fstar\alpha\\
\hline
   1&0&1&1&0&1\\\cline{3-3}
   &&0&&&\\
   \hline
   0&1&1&1&0&1\\\cline{3-3}
   &&0&&&\\
   \cline{2-5}
   &0&1&0&1&\\\cline{5-5}
   &&&&0&\\
   \hline
\end{array}$

\item $\begin{array}{|c|c|c|c|c|c|c|}
\hline
\alpha&\neg\alpha&\alpha\lor\neg\alpha&\fstar\alpha&\alpha\lor\fstar\alpha &\neg\alpha\lor\fstar\alpha &\alpha\lor\neg\alpha\lor\fstar\alpha\\
\hline
   1&0&1&1&1&1&1\\\cline{4-7}
   &&&0&1&0&1\\
   \hline
   0&1&1&1&1&1&1\\\cline{4-7}
   &&&0&0&1&1\\
   \cline{2-7}
   &0&0&1&1&1&1\\
   \hline
\end{array}$
\end{enumerate}

\begin{remark}\label{rem:tt}
    We have the following observations from the above tables.
\begin{enumerate}[label=(\roman*)]
    \item Table (1) shows that the classical double negation rules do not hold in $\mbst$.
    \item Table (1) also shows that for any formula $\alpha$, $\alpha\land\neg\alpha$ always receives the value 0. However, this does not imply that for any formula $\alpha$, $\neg(\alpha\land\neg\alpha)$ is an $\mbst$-tautology.
    \item Table (2) shows that that $\alpha\lor\neg\alpha$ is not an $\mbst$-tautology, which is expected for a paracomplete logic. It also shows that $\neg(\alpha\lor\neg\alpha)\limp\fstar\alpha$ is an $\mbst$-tautology, for any $\alpha$. This can be translated to: ``if there is evidence that $\alpha\lor\neg\alpha$ is false, for some $\alpha$, i.e., the law of excluded middle fails for this $\alpha$, then there must be evidence missing for $\alpha$.'' However, it can observed from the table that $\fstar\alpha\limp(\alpha\lor\neg\alpha)$ is not an $\mbst$-tautology, as it possible for $\fstar\alpha$ to assume the value 1, while $\alpha\lor\neg\alpha$ is mapped to 0.
    \item From table (3), we see that, in addition to $\alpha\lor\neg\alpha$, $\alpha\lor\fstar\alpha$ and $\neg\alpha\lor\fstar\alpha$ are also not $\mbst$-tautologies. However, $\alpha\lor\neg\alpha\lor\fstar\alpha$ is an $\mbst$-tautology for any $\alpha$, i.e., included middle holds. Upon proving the Completeness theorem, these would suffice to establish that $\mbst$ is indeed an LFU, as per Definition \ref{def:LFU}.
\end{enumerate}  
\end{remark}

\subsection{Soundness and Completeness}
In this subsection, we embark on the task of proving the soundness and completeness of $\mbst$ as presented in Subsection \ref{subsec:syntax} with respect to the semantics presented in Subsection \ref{subsec:semantics}. As before, let $\mbst=\langle\lang,\vdashmb\rangle$ and $\modelsmb\,\subseteq\pow(\lang)\times\lang$ be as described above.

\begin{theorem}[Soundness]
Suppose $\Gamma\cup\{\varphi\}\subseteq\mathcal{L}$. Then, $\Gamma\vdash_{\mbst}\varphi$ implies $\Gamma\models_{\mbst}\varphi$
\end{theorem}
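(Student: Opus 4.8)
The plan is to run the standard Hilbert-style soundness argument: fix an arbitrary $\mbst$-valuation $v$ that satisfies every member of $\Gamma$ (i.e.\ $v(\gamma)=1$ for all $\gamma\in\Gamma$), take a derivation $(\varphi_1,\ldots,\varphi_n)$ of $\varphi$ from $\Gamma$ witnessing $\Gamma\vdashmb\varphi$, and prove by induction on $i$ that $v(\varphi_i)=1$ for every $1\le i\le n$. Since $\varphi_n=\varphi$ and $v$ is arbitrary, this yields $\Gamma\modelsmb\varphi$. The induction reduces the whole theorem to two claims about a single valuation $v$: (a) every instance of Axioms (1)--(11) receives value $1$ under $v$; and (b) Modus ponens preserves the value $1$, i.e.\ if $v(\alpha)=1$ and $v(\alpha\limp\beta)=1$ then $v(\beta)=1$. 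Claim (b) is immediate from valuation condition (iii): $v(\alpha\limp\beta)=1$ forces $v(\alpha)=0$ or $v(\beta)=1$, and since $v(\alpha)=1$ we must have $v(\beta)=1$.

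For claim (a), I would split the axioms into the positive classical fragment and the two genuinely paracomplete axioms. By Remark \ref{rem:val}, conditions (i)--(iii) make $v$ behave exactly like a classical valuation on $\land,\lor,\limp$, so Axioms (1)--(9) --- being the axioms of CPL$^+$ --- are verified by the same routine truth-table check as in classical logic, with no appeal to $\neg$ or $\fstar$. The remaining work is Axioms (10) and (11), which is where conditions (iv) and (v) enter. For Axiom (10), $\alpha\limp(\neg\alpha\limp\beta)$: if $v(\alpha)=0$ the outer implication is already $1$ by (iii); if $v(\alpha)=1$ then (iv) gives $v(\neg\alpha)=0$, whence $v(\neg\alpha\limp\beta)=1$ and so the whole formula is $1$. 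For Axiom (11), $\alpha\lor\neg\alpha\lor\fstar\alpha$: if either $v(\alpha)=1$ or $v(\neg\alpha)=1$ the disjunction is $1$ by (i); otherwise $v(\alpha)=0=v(\neg\alpha)$, and condition (v) forces $v(\fstar\alpha)=1$, so the disjunction is again $1$. Thus all eleven axiom schemata are $\mbst$-tautologies, matching the computations recorded in Tables (1)--(3).

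With (a) and (b) in hand, the induction closes immediately: if $\varphi_i$ is an axiom instance it has value $1$ by (a); if $\varphi_i\in\Gamma$ it has value $1$ by the choice of $v$; and if $\varphi_i$ is obtained from earlier $\varphi_j,\varphi_k$ by MP, the induction hypothesis gives $v(\varphi_j)=v(\varphi_k)=1$ and (b) yields $v(\varphi_i)=1$. I do not expect a genuine obstacle here, since this is the textbook induction; the only points requiring real care, rather than routine classical bookkeeping, are the verifications of Axioms (10) and (11), precisely because $\mbst$-valuations are non-deterministic on $\neg$ and $\fstar$ and so conditions (iv) and (v) must be invoked in exactly the right cases.
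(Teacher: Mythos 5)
Your proposal is correct and follows essentially the same route as the paper's own proof: the standard induction along the derivation, with Axioms (1)--(9) handled via the classical behaviour of conditions (i)--(iii), Axioms (10) and (11) verified by exactly the same case analyses invoking conditions (iv) and (v), and MP preservation from condition (iii). No gaps; the only cosmetic difference is that you isolate the axiom-validity and MP-preservation claims up front, whereas the paper interleaves them in the base case and induction step.
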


\begin{proof}
Suppose $\Gamma\vdash_{\mbst}\varphi$. Then, there exists a derivation $(\varphi_1,\ldots,\varphi_n=\varphi)$ of $\varphi$ from $\Gamma$. Suppose $v:\lang\to\{0,1\}$ is an $\mbst$-valuation such that $v(\gamma)=1$ for all $\gamma\in\Gamma$. We now show that $v(\varphi_i)=1$ for each $1\le i\le n$ by induction on $i$.

\textsc{Base Case:} $i=1$.

Clearly, $\varphi_{1}$ must be an instance of an axiom or $\varphi_{1}\in\Gamma$.
If $\varphi_{1}\in \Gamma$, then $v(\varphi_1)=1$ by hypothesis.

Suppose $\varphi_{1}$ is an instance of an axiom. If $\varphi_1$ is an instance of one of the Axioms (1)--(9), i.e., an axiom of CPL$^{+}$, as noted in Remark \ref{rem:axioms}, then, by Remark \ref{rem:val}, $v(\varphi_1)=1$.

If $\varphi_1$ is an instance of Axiom (10), then $\varphi_1=\alpha\limp(\neg\alpha\limp\beta)$ for some $\alpha,\beta\in\lang$. Now, if $v(\alpha)=0$, then $v(\varphi_1)=1$. On the other hand, if $v(\alpha)=1$, then, by condition (iv) of Definition \ref{def:semantics}, $v(\neg\alpha)=0$, which implies that $v(\neg\alpha\limp\beta)=1$. Hence, $v(\varphi_1)=1$.

Finally, if $\varphi_1$ is an instance of Axiom (11), then $\varphi_1=\alpha\lor(\neg\alpha\lor\fstar\alpha)$ for some $\alpha\in\lang$. We note that, if $v(\alpha)=1$ or $v(\neg\alpha)=1$, then $v(\varphi_1)=1$. On the other hand, if $v(\alpha)=0=v(\neg\alpha)$, then, by condition (v) of Definition \ref{def:semantics}, $v(\fstar\alpha)=1$, and hence, $v(\varphi_1)=1$. 

Thus, in all cases $v(\varphi_1)=1$.

\textsc{induction hypothesis:} Suppose $v(\varphi_l)=1$ for all $1\le l<i$ for some $1<i\le n$.

\textsc{Induction step:} We need to show that $v(\varphi_i)=1$. In case $\varphi_i$ is an instance of an axiom or $\varphi_i\in\Gamma$, then $v(\varphi_i)=1$ by the same arguments as in the base case. Otherwise, there exist $j,k\le i$ such that $\varphi_i$ is obtained from $\varphi_j,\varphi_k$ by MP. Without loss of generality, we can assume that $\varphi_{k}=\varphi_{j}\limp\varphi_i$. By the induction hypothesis, $v(\varphi_j)=1=v(\varphi_k)=v(\varphi_j\limp\varphi_i)$. Thus, $v(\varphi_i)=1$. 

Hence, $v(\varphi_i)=1$ for all $1\le i\le n$, and hence, in particular, $v(\varphi_n)=v(\varphi)=1$. Thus, $\Gamma\models_\mbst\varphi$.
\end{proof}

We next invoke some general tools below to help us prove the completeness theorem. These are the same ones that have been used for proving the completeness of LFIs in \cite{Carnielli2007}, for example.

\begin{definition}
    Suppose $\mathcal{S}=\langle\lang,\vdashs\rangle$ is a logic and $\Gamma\cup\{\varphi\}\subseteq \lang$. $\Gamma$ is called \emph{maximal relative to $\varphi$} in $\mathcal{S}$, if $\Gamma\not\vdashs\varphi$ but for any $\psi\in\lang\setminus\Gamma$, $\Gamma\cup\{\psi\}\vdashs\varphi$.
\end{definition}

\begin{definition}
    Suppose $\mathcal{S}=\langle\lang,\vdashs\rangle$ is a logic and $\Gamma\cup\{\psi\}\subseteq \lang$. $\Gamma$ is called \emph{closed} in $\mathcal{S}$ or a \emph{closed theory} of $\mathcal{S}$, if $\Gamma\vdashs\psi$ iff $\psi\in\Gamma$ for all $\psi\in\lang$.
\end{definition}

\begin{lemma}\label{lem:maxrel->closed}
    Suppose $\mathcal{S}=\langle\lang,\vdashs\rangle$ is a Tarskian logic and $\Gamma\subseteq \lang$. If $\Gamma$ is maximal relative to $\varphi$, for some $\varphi\in\lang$, then $\Gamma$ is closed.
\end{lemma}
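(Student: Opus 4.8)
The plan is to prove that maximality relative to $\varphi$ implies closedness. Let $\Gamma$ be maximal relative to $\varphi$ in the Tarskian logic $\mathcal{S}$. To show $\Gamma$ is closed, I must establish that for every $\psi \in \lang$, $\Gamma \vdashs \psi$ iff $\psi \in \Gamma$. The right-to-left direction is immediate from Reflexivity: if $\psi \in \Gamma$, then $\Gamma \vdashs \psi$. So the real content is the left-to-right direction, and this is where I expect the argument to hinge.

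For the forward direction, I would argue by contraposition: assume $\psi \notin \Gamma$ and show $\Gamma \not\vdashs \psi$. Since $\psi \in \lang \setminus \Gamma$, maximality relative to $\varphi$ gives $\Gamma \cup \{\psi\} \vdashs \varphi$. Now suppose, toward a contradiction, that $\Gamma \vdashs \psi$. The strategy is to combine this with $\Gamma \cup \{\psi\} \vdashs \varphi$ using the Tarskian properties to derive $\Gamma \vdashs \varphi$, which contradicts the defining hypothesis $\Gamma \not\vdashs \varphi$ of maximality relative to $\varphi$.

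The key step is the application of Cut (Transitivity). From $\Gamma \vdashs \psi$ together with Reflexivity (which gives $\Gamma \vdashs \gamma$ for every $\gamma \in \Gamma$), I have $\Gamma \vdashs \delta$ for every $\delta \in \Gamma \cup \{\psi\}$. Combined with $\Gamma \cup \{\psi\} \vdashs \varphi$, the Cut property (taking $\Delta = \Gamma \cup \{\psi\}$ in the statement of Transitivity) yields $\Gamma \vdashs \varphi$. This contradicts $\Gamma \not\vdashs \varphi$. Hence $\Gamma \not\vdashs \psi$, completing the contrapositive, and therefore $\Gamma \vdashs \psi$ implies $\psi \in \Gamma$.

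I expect the main (and only) subtlety to be bookkeeping around the exact form of the Cut rule: one must verify that $\Gamma$ derives \emph{every} member of $\Gamma \cup \{\psi\}$ before invoking Transitivity, which is precisely where Reflexivity for the elements of $\Gamma$ and the assumed derivation $\Gamma \vdashs \psi$ are both needed. No other properties beyond Reflexivity and Cut are required; Monotonicity is not even invoked here. The argument is short and entirely structural, relying on nothing specific to $\mbst$ and using only that $\mathcal{S}$ is Tarskian, exactly as stated.
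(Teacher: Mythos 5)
Your proof is correct and follows essentially the same route as the paper's: both note that the direction $\psi \in \Gamma \Rightarrow \Gamma \vdashs \psi$ is immediate from Reflexivity, and both derive the contradiction $\Gamma \vdashs \varphi$ by using Reflexivity to show $\Gamma$ entails every member of $\Gamma \cup \{\psi\}$ and then applying Cut to $\Gamma \cup \{\psi\} \vdashs \varphi$. The only difference is presentational (you split the biconditional and argue the forward direction by contraposition, while the paper runs a single argument by contradiction), so there is nothing to add.
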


\begin{proof}
    Suppose $\Gamma$ is maximal relative to $\varphi$, but is not closed. Now, as $\mathcal{S}$ is Tarskian, and hence, satisfies reflexivity, $\Gamma\vdashs\psi$ for all $\psi\in\Gamma$. This implies that, since $\Gamma$ is not closed, there exists $\psi\in\lang$ such that $\Gamma\vdashs\psi$ but $\psi\notin\Gamma$. Then, since $\Gamma$ is maximal relative to $\varphi$, $\Gamma\not\vdashs\varphi$ but $\Gamma\cup\{\psi\}\vdashs\varphi$. Now, by using reflexivity and that $\Gamma\vdashs\psi$, we can conclude that $\Gamma\vdashs\theta$ for all $\theta\in\Gamma\cup\{\psi\}$. So, by transitivity and that $\Gamma\cup\{\psi\}\vdashs\varphi$, $\Gamma\vdashs\varphi$. This is a contradiction. Hence, $\Gamma$ must be closed.
\end{proof}

The following lemma is the well-known Lindenbaum-Asser theorem and can be found, e.g., in \cite{Beziau1999}. We skip the proof.

\begin{lemma}\label{lem:Lind-Asser}
    Suppose $\mathcal{S}=\langle\lang,\vdashs\rangle$ is a Tarskian and finitary logic over the language $\mathcal{L}$. Let $\Gamma \cup\{\varphi\}\subseteq\mathcal{L}$ be such that $\Gamma\not\vdashs\varphi$. Then, there exists $\Delta\supseteq\Gamma$ that is maximal relative to $\varphi$ in $\mathcal{S}$.  
\end{lemma}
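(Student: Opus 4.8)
The plan is to run the standard Lindenbaum--Asser construction, building $\Delta$ as the union of an increasing chain of sets obtained from $\Gamma$ by testing formulas one at a time. Since $V$ is denumerable and the signature $\Sigma$ is finite, $\lang$ is countable; fix an enumeration $\lang=\{\psi_1,\psi_2,\ldots\}$. First I would set $\Delta_0:=\Gamma$ and, recursively, $\Delta_{n+1}:=\Delta_n\cup\{\psi_{n+1}\}$ whenever $\Delta_n\cup\{\psi_{n+1}\}\not\vdashs\varphi$, and $\Delta_{n+1}:=\Delta_n$ otherwise. Then I would define $\Delta:=\bigcup_{n\in\NN}\Delta_n$. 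By construction $\Gamma=\Delta_0\subseteq\Delta$.

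Next I would verify the two defining properties of maximality relative to $\varphi$. For the first, namely $\Delta\not\vdashs\varphi$, a routine induction shows $\Delta_n\not\vdashs\varphi$ for every $n$ (the base case is the hypothesis $\Gamma\not\vdashs\varphi$, and the inductive step is immediate from the case split in the definition). Suppose toward a contradiction that $\Delta\vdashs\varphi$. Here the finitarity of $\mathcal{S}$ is essential: there is a finite $\Gamma_0\subseteq\Delta$ with $\Gamma_0\vdashs\varphi$. Since the $\Delta_n$ form an increasing chain and $\Gamma_0$ is finite, $\Gamma_0\subseteq\Delta_N$ for some $N$, whence monotonicity gives $\Delta_N\vdashs\varphi$, contradicting $\Delta_N\not\vdashs\varphi$.

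For the second property, let $\psi\in\lang\setminus\Delta$ be arbitrary; say $\psi=\psi_{n+1}$ in the enumeration. Since $\psi\notin\Delta$, in particular $\psi\notin\Delta_{n+1}$, so the recursion must have taken the second branch, i.e., $\Delta_n\cup\{\psi_{n+1}\}\vdashs\varphi$. As $\Delta_n\subseteq\Delta$, monotonicity yields $\Delta\cup\{\psi\}\vdashs\varphi$. Combined with $\Delta\not\vdashs\varphi$, this shows that $\Delta$ is maximal relative to $\varphi$, as required.

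I expect the only genuinely delicate point to be the use of finitarity in the chain-union argument; everything else is bookkeeping with reflexivity and monotonicity. If one preferred not to invoke the countability of $\lang$, the same conclusion would follow by applying Zorn's lemma to the poset of all supersets of $\Gamma$ that do not derive $\varphi$, ordered by inclusion, where finitarity again guarantees that the union of any chain remains in the poset and hence serves as an upper bound.
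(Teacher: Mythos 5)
Your proof is correct, but it takes a genuinely different route from the paper's. You run the classical enumeration-based Lindenbaum construction: fix a listing $\lang=\{\psi_1,\psi_2,\ldots\}$, extend $\Gamma$ one formula at a time, and take the union of the resulting increasing chain. This is legitimate in the paper's setting, since $\lang$ is generated from a denumerable set of variables by a finite signature and is therefore countable; your use of finitarity (to pull a derivation of $\varphi$ from the union down into some finite stage $\Delta_N$) and of monotonicity (to push $\Delta_n\cup\{\psi\}\vdashs\varphi$ up to $\Delta\cup\{\psi\}\vdashs\varphi$) is exactly where those hypotheses are needed, and your case analysis on the recursion is airtight. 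The paper instead applies Zorn's lemma to the poset $T=\{\Sigma\subseteq\lang\mid\Gamma\subseteq\Sigma\hbox{ and }\Sigma\not\vdashs\varphi\}$, using finitarity in the same structural role: to show that the union of a chain in $T$ stays in $T$. The trade-off is the expected one. Your construction is more elementary and effective modulo the enumeration (no appeal to the axiom of choice beyond what countability provides), but it depends on the cardinality of the language; the paper's Zorn argument is indifferent to cardinality and so proves the lemma in the generality in which it is stated, for an arbitrary Tarskian finitary logic. Your closing remark already identifies this Zorn variant as the cardinality-free fallback, which is precisely the paper's proof, so you have in effect covered both arguments.
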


We now come back to the logic $\mbst$. Let $\lang$ be the set of formulas of $\mbst$, as before.

\begin{lemma}\label{lem:mbst-val}
Let $\Gamma\cup\{\varphi\}\subseteq\lang$ such that $\Gamma$ is maximal relative to $\varphi$ in $\mbst$. Then, the mapping $v:\lang\to\{0,1\}$ defined by
\[
v(\psi)=1\hbox{ iff }\psi\in\Gamma,\hbox{ for all }\psi\in\lang,
\]
is an $\mbst$-valuation.
\end{lemma}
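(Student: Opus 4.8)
The plan is to verify that $v$ satisfies each of the five clauses (i)--(v) of Definition \ref{def:semantics} by rephrasing each as an assertion about membership in $\Gamma$, since $v(\psi)=1$ means exactly $\psi\in\Gamma$. The first thing I would record is that, by Lemma \ref{lem:maxrel->closed}, $\Gamma$ is closed, so that $\psi\in\Gamma$ if and only if $\Gamma\vdashmb\psi$; this lets me pass freely between membership and derivability. I would also note the defining feature of maximality relative to $\varphi$ in the form I will use repeatedly: $\psi\in\Gamma$ iff $\Gamma\cup\{\psi\}\not\vdashmb\varphi$ (if $\psi\in\Gamma$ then $\Gamma\cup\{\psi\}=\Gamma\not\vdashmb\varphi$; if $\psi\notin\Gamma$ then $\Gamma\cup\{\psi\}\vdashmb\varphi$ by maximality), and in particular that $\varphi\notin\Gamma$.

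The conceptual heart of the argument is clause (i), the primeness (disjunction) property: $\alpha\lor\beta\in\Gamma$ iff $\alpha\in\Gamma$ or $\beta\in\Gamma$. The easy direction uses Axioms 6 and 7 with MP and closure. For the harder direction I would argue contrapositively: if $\alpha\notin\Gamma$ and $\beta\notin\Gamma$, then $\Gamma\cup\{\alpha\}\vdashmb\varphi$ and $\Gamma\cup\{\beta\}\vdashmb\varphi$ by maximality; applying the Deduction theorem (Theorem \ref{thm:DedThm}) gives $\Gamma\vdashmb\alpha\limp\varphi$ and $\Gamma\vdashmb\beta\limp\varphi$; then Axiom 8 together with two applications of MP yields $\Gamma\vdashmb(\alpha\lor\beta)\limp\varphi$, whence $\alpha\lor\beta\notin\Gamma$ (otherwise MP would give $\Gamma\vdashmb\varphi$). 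This is the step I expect to be the main obstacle, since it is the only place where maximality, the Deduction theorem, and the disjunction axiom must be combined; every later clause leans on it.

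With clause (i) in hand, clauses (ii) and (iii) follow by routine axiom-chasing. For (ii) I would use Axioms 4 and 5 (with MP) for the forward direction and Axiom 3 for the converse. For (iii) I would handle the case $\beta\in\Gamma$ via the instance $\beta\limp(\alpha\limp\beta)$ of Axiom 1, and the case $\alpha\notin\Gamma$ by invoking Axiom 9, $\alpha\lor(\alpha\limp\beta)$, which lies in $\Gamma$ by closure, and then applying clause (i) to conclude $\alpha\limp\beta\in\Gamma$; the remaining direction, that $\alpha\in\Gamma$ and $\alpha\limp\beta\in\Gamma$ force $\beta\in\Gamma$, is immediate from MP.

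Finally, clauses (iv) and (v) exploit the two non-classical axioms together with $\varphi\notin\Gamma$. For (iv), if both $\alpha\in\Gamma$ and $\neg\alpha\in\Gamma$, then the explosion Axiom 10 with two applications of MP would give $\Gamma\vdashmb\varphi$, a contradiction, so $\alpha\in\Gamma$ forces $\neg\alpha\notin\Gamma$. For (v), the included-middle Axiom 11 places $\alpha\lor\neg\alpha\lor\fstar\alpha$ in $\Gamma$ by closure; two applications of clause (i), using $\alpha\notin\Gamma$ and $\neg\alpha\notin\Gamma$, then leave $\fstar\alpha\in\Gamma$. Having verified (i)--(v), I conclude that $v$ is an $\mbst$-valuation.
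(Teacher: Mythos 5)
Your proof is correct, and its overall skeleton matches the paper's: establish closure via Lemma \ref{lem:maxrel->closed}, then verify clauses (i)--(v) of Definition \ref{def:semantics}, with clause (i) proved exactly as in the paper (maximality plus the Deduction theorem plus Axiom 8). Where you genuinely diverge is in clauses (iii) and (v). The paper handles the case $v(\alpha)=0$ of clause (iii) and all of clause (v) by fresh Hilbert-style derivations: it again invokes maximality and the Deduction theorem to place $\alpha\limp\varphi$ (respectively $\alpha\limp\varphi$, $\neg\alpha\limp\varphi$, $\fstar\alpha\limp\varphi$) in $\Gamma$, and then chains Axiom 8 with Axiom 9 (respectively Axiom 11) to derive $\varphi$ and reach a contradiction. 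You instead observe that once clause (i) is proved, it says precisely that $\Gamma$ is prime ($\alpha\lor\beta\in\Gamma$ iff $\alpha\in\Gamma$ or $\beta\in\Gamma$), and then clause (iii) in the case $\alpha\notin\Gamma$ follows by applying primeness to Axiom 9, $\alpha\lor(\alpha\limp\beta)\in\Gamma$, while clause (v) follows by applying primeness twice to Axiom 11, $\alpha\lor\neg\alpha\lor\fstar\alpha\in\Gamma$. This reuse is legitimate (clause (i) is available by the time you need it) and buys a noticeably shorter argument that isolates the single place where maximality and the Deduction theorem do real work; the paper's version, by contrast, keeps each clause self-contained at the cost of repeating the same derivation pattern three times. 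Both are sound; yours is the leaner organization.
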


\begin{proof} 
We first note that $\mbst$ is a Tarskian logic as pointed out in Remark \ref{rem:mbst-Tarskian/finitary}. Thus, by Lemma \ref{lem:maxrel->closed}, $\Gamma$ being maximal relative to $\varphi$ in $\mbst$, is closed. We now show below that the function $v$ defined above satisfies the conditions in Definition \ref{def:semantics}. Let $\alpha,\beta\in\lang$.
\begin{enumerate}[label=(\roman*)]
    \item Suppose $v(\alpha\lor\beta)=1$. Then, by definition of $v$, $\alpha\lor\beta\in\Gamma$. If possible, let $v(\alpha)=0$ and $v(\beta)=0$, i.e., $\alpha,\beta\notin\Gamma$. 
    
    Now, since $\Gamma$ is maximal relative to $\varphi$, $\Gamma\cup\{\alpha\}\vdashmb\varphi$ and  $\Gamma\cup\{\beta\}\vdashmb\varphi$. This implies, by the Deduction theorem (Theorem \ref{thm:DedThm}), $\Gamma\vdashmb\alpha \limp\varphi$ and $\Gamma\vdashmb\beta \limp\varphi$. Since $\Gamma$ is closed, $\alpha\limp\varphi,\beta\limp\varphi\in\Gamma$. We can then construct the following derivation of $\varphi$ from $\Gamma$.
\[
    \begin{array}{rll}
        1.&\alpha\limp\varphi&(\alpha\limp\varphi\in\Gamma)\\
        2.&\beta\limp\varphi&(\beta\limp\varphi\in\Gamma)\\
        3.&(\alpha\limp\varphi)\limp((\beta\limp\varphi)\limp((\alpha\lor\beta)\limp\varphi))&(\hbox{Axiom 8})\\
        4.&(\beta\limp\varphi)\limp((\alpha\lor\beta)\limp\varphi)&(\hbox{MP on (1) \& (3)})\\
        5.&(\alpha\lor\beta)\limp\varphi&(\hbox{MP on (2) \& (4)})\\
        6.&\alpha\lor\beta&(\alpha\lor\beta\in\Gamma)\\
        7.&\varphi&(\hbox{MP on (5) \& (6)})\\
    \end{array}
    \]
    Thus, $\Gamma\vdashmb\varphi$, which is a contradiction. Hence, either $v(\alpha)=1$ or $v(\beta)=1$.
   
   Conversely, suppose either $v(\alpha)=1$ or $v(\beta)=1$, i.e.,   either $\alpha\in\Gamma$ or $\beta\in\Gamma$.

   Suppose $\alpha\in\Gamma$. We then have the following derivation of $\alpha\lor\beta$ from $\Gamma$.
   \[
   \begin{array}{rll}
        1.&\alpha\limp(\alpha\lor\beta)&(\hbox{Axiom 6})\\
        2.&\alpha&(\alpha\in\Gamma)\\
        3.&\alpha\lor\beta&(\hbox{MP on (1) \& (2)})
   \end{array}
   \]
   Thus, $\Gamma\vdashmb\alpha\lor\beta$. Now, as $\Gamma$ is closed, $\alpha\lor\beta\in\Gamma$, which implies that $v(\alpha\lor\beta)=1$.
   
  In case $\beta\in\Gamma$, it can be proved using similar arguments, with Axiom 6 replaced by Axiom 7 in the derivation above, that $\Gamma\vdashmb\alpha\lor\beta$, and hence,  $v(\alpha\lor\beta)=1$.
  
    \item Suppose $v(\alpha\land\beta)=1$. Then, by definition of $v$, $\alpha\land\beta\in\Gamma$. We can then construct the following derivation of $\alpha$ from $\Gamma$.
    \[
    \begin{array}{rll}
        1.&(\alpha\land\beta)\limp\alpha&(\hbox{Axiom 4})\\
        2.&\alpha\land\beta&(\alpha\land\beta\in\Gamma)\\
        3.&\alpha&(\hbox{MP on (1) \& (2))}
    \end{array}
    \]
    Thus, $\Gamma\vdashmb\alpha$. Then, as $\Gamma$ is closed, $\alpha\in\Gamma$. So, $v(\alpha)=1$. It can be proved by similar arguments, with Axiom 4 replaced by Axiom 5 in the above derivation, that $\Gamma\vdashmb\beta$ and hence, $v(\beta)=1$.

    Conversely, suppose $v(\alpha)=1=v(\beta)$. Then, $\alpha,\beta\in \Gamma$. The following derivation of $\alpha\land\beta$ can then be constructed from $\Gamma$.
    \[
    \begin{array}{rll}
         1.&\alpha\limp(\beta\limp(\alpha\land\beta))&(\hbox{Axiom 3})\\
         2.&\alpha&(\alpha\in\Gamma)\\
         3.&\beta\limp(\alpha\land\beta)&(\hbox{MP on (1) \& (2)})\\
         4.&\beta&(\beta\in\Gamma)\\
         4.&\alpha\land\beta&(\hbox{MP on (3) \& (4)})
    \end{array}
    \]
    Thus, $\Gamma\vdashmb\alpha\land\beta$, which implies that $\alpha\land\beta\in \Gamma$ since $\Gamma$ is closed. Hence, $v(\alpha\land\beta)=1$.
   
    \item Suppose $v(\alpha\limp\beta)=1$. Then, $\alpha\limp\beta\in \Gamma$ by definition of $v$. Suppose further that $v(\alpha)\neq0$, i.e., $v(\alpha)=1$. Then, $\alpha\in\Gamma$. So, we have the following derivation of $\beta$ from $\Gamma$.
    \[
    \begin{array}{rll}
         1.&\alpha&(\alpha\in\Gamma)\\
         2.&\alpha\limp\beta&(\alpha\limp\beta\in\Gamma)\\
         3.&\beta&(\hbox{MP on (1) \& (2)})
    \end{array}
    \]
    Thus, $\Gamma\vdashmb\beta$, and hence, $\beta\in\Gamma$ as $\Gamma$ is closed. This, in turn, implies that $v(\beta)=1$. Hence, either $v(\alpha)=0$ or $v(\beta)=1$.

    Conversely, suppose $v(\alpha)=0$ or $v(\beta)=1$.

    \textsc{Case 1:} $v(\alpha)=0$.

    Then, $\alpha\notin\Gamma$. Since, $\Gamma$ is maximal relative to $\varphi$ in $\mbst$, this implies that $\Gamma\cup\{\alpha\}\vdashmb\varphi$. So, by the Deduction theorem (Theorem \ref{thm:DedThm}), $\Gamma\vdashmb\alpha\limp\varphi$. Since $\Gamma$ is closed, $\alpha\limp\varphi\in\Gamma$. Now, if possible, suppose $\alpha\limp\beta\notin\Gamma$. Then, again by the same reasoning as above, $(\alpha\limp\beta)\limp\varphi\in\Gamma$. We can then have the following derivation of $\varphi$ from $\Gamma$.
    \[
    \begin{array}{rll}
         1.&\alpha\limp\varphi&(\alpha\limp\varphi\in\Gamma)\\
         2.&(\alpha\limp\beta)\limp\varphi&((\alpha\limp\beta)\limp\varphi\in\Gamma)\\
         3.&(\alpha\limp\varphi)\limp((\alpha\limp\beta)\limp\varphi)\limp((\alpha\lor(\alpha\limp\beta))
         \limp\varphi)&(\hbox{Axiom 8})\\
         4.&((\alpha\limp\beta)\limp\varphi)\limp((\alpha\lor(\alpha\limp\beta))\limp\varphi)&(\hbox{MP on (1) \& (3)})\\
         5.&(\alpha\lor(\alpha\limp\beta))\limp\varphi&(\hbox{MP on (2) \& (4)})\\
         6.&\alpha\lor(\alpha\limp\beta)&(\hbox{Axiom 9})\\
         7.&\varphi&(\hbox{MP on (5) \& (6)})
    \end{array}
    \]
    Thus, $\Gamma\vdashmb\varphi$, which contradicts the assumption that $\Gamma$ is maximal relative to $\varphi$. Hence, $\alpha\limp\beta\in\Gamma$.
    
    \textsc{Case 2:} $v(\beta)=1$
    
    Then, $\beta\in \Gamma$. So, we have the following derivation of $\alpha\limp\beta$ from $\Gamma$.
    \[
    \begin{array}{rll}
         1.&\beta&(\beta\in\Gamma)\\
         2.&\beta\limp(\alpha\limp\beta)&(\hbox{Axiom 1})\\
         3.&\alpha\limp\beta&(\hbox{MP on (1) \& (2)})
    \end{array}
    \]
    Thus, $\Gamma\vdashmb\alpha\limp\beta$, which implies that $\alpha\limp\beta\in\Gamma$ since $\Gamma$ is closed. So, in either case, $\alpha\limp\beta\in\Gamma$, and hence, $v(\alpha\limp\beta)=1$.

    \item Suppose $v(\alpha)=1$. If possible, let $v(\neg\alpha)=1$. So, $\alpha,\neg\alpha\in\Gamma$. Then, we can construct the following derivation of $\varphi$ from $\Gamma$.
    \[
    \begin{array}{rll}
         1.&\alpha&(\alpha\in\Gamma)\\
         2.&\neg\alpha&(\neg\alpha\in\Gamma)\\
         3.&\alpha\limp(\neg\alpha\limp\varphi)&(\hbox{Axiom 10})\\
         4.&\neg\alpha\limp\varphi&(\hbox{MP on (1) \& (3)})\\
         5.&\varphi&(\hbox{MP on (2) \& (4)})
    \end{array}
    \]
    Thus, $\Gamma\vdashmb\varphi$, which contradicts the assumption that $\Gamma$ is maximal relative to $\varphi$. Hence, $v(\neg\alpha)=0$.
    
    \item Suppose $v(\alpha)=0=v(\neg\alpha)$ but $v(\fstar\alpha)=0$. Then, $\alpha,\neg\alpha,\fstar\alpha\notin\Gamma$.
    
    Since $\Gamma$ is maximal relative to $\varphi$, this implies $\Gamma\vdashmb\alpha\limp\varphi,\,\Gamma\vdashmb\neg\alpha\limp\varphi,\,\Gamma\vdashmb\fstar\alpha\limp\varphi$. Then, as $\Gamma$ is closed, $\alpha\limp\varphi,\neg\alpha\limp\varphi,\fstar\alpha\limp\varphi\in\Gamma$. We can then construct the following derivation of $\varphi$ from $\Gamma$.
    \[
    \begin{array}{rll}
         1.&\alpha\limp\varphi&(\alpha\limp\varphi\in\Gamma)\\
         2.&\neg\alpha\limp\varphi&(\neg\alpha\limp\varphi\in\Gamma)\\
         3.&(\alpha \limp\varphi)\limp((\neg\alpha\limp\varphi)\limp((\alpha\lor\neg\alpha)\limp\varphi))&(\hbox{Axiom 8})\\
         4.&(\neg\alpha\limp\varphi)\limp((\alpha\lor\neg\alpha)\limp\varphi)&(\hbox{MP on (1) \& (3)})\\
         5.&(\alpha\lor\neg\alpha)\limp\varphi&(\hbox{MP on (2) \& (4)})\\
         6.&((\alpha\lor\neg\alpha)\limp\varphi)\limp((\fstar\alpha\limp\varphi)\limp((\alpha\lor\neg\alpha\lor\fstar\alpha)\limp\varphi))&(\hbox{Axiom 8})\\
         7.&(\fstar\alpha\limp\varphi)\limp((\alpha\lor\neg\alpha\lor\fstar\alpha)\limp\varphi)&(\hbox{MP on (5) \& (6)})\\
         8.&\fstar\alpha\limp\varphi&(\fstar\alpha\limp\varphi\in\Gamma)\\
         9.&(\alpha\lor\neg\alpha\lor\fstar\alpha)\limp\varphi&(\hbox{MP on (7) \& (8)})\\
         10.&\alpha\lor\neg\alpha\lor\fstar\alpha&(\hbox{Axiom 11})\\
         11.&\varphi&(\hbox{MP on (9) \& (10)})
    \end{array}
    \]
    Thus, $\Gamma\vdashmb\varphi$. This contradicts the assumption that $\Gamma$ is maximal relative to $\varphi$. Hence, if $v(\alpha)=v(\neg\alpha)=0$, then $v(\fstar\alpha)=1$. 
\end{enumerate}
Thus, $v$ is indeed an $\mbst$-valuation.
\end{proof}

\begin{theorem}[Completeness]\label{thm:completeness}
    Suppose $\Gamma\cup\{\varphi\}\subseteq\lang$. Then, $\Gamma\modelsmb\varphi$ implies $\Gamma\vdashmb\varphi$.
\end{theorem}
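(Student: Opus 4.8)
The plan is to establish the completeness statement by proving its contrapositive: assuming $\Gamma \not\vdashmb \varphi$, I would construct a single $\mbst$-valuation witnessing $\Gamma \not\modelsmb \varphi$. This is the familiar Lindenbaum-style canonical-model argument, and essentially all of the substantive work has already been packaged into the preceding lemmas, so the theorem follows by assembling them.

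First, since $\mbst$ is Tarskian and finitary (Remark \ref{rem:mbst-Tarskian/finitary}) and we are assuming $\Gamma \not\vdashmb \varphi$, I would invoke the Lindenbaum-Asser theorem (Lemma \ref{lem:Lind-Asser}) to obtain a set $\Delta \supseteq \Gamma$ that is maximal relative to $\varphi$ in $\mbst$. Next I would apply Lemma \ref{lem:mbst-val} to this $\Delta$, which tells us that the characteristic map $v(\psi)=1$ iff $\psi\in\Delta$ is an $\mbst$-valuation. It then remains only to check that $v$ separates $\Gamma$ from $\varphi$. Since $\Delta\supseteq\Gamma$, every $\gamma\in\Gamma$ lies in $\Delta$, so $v(\gamma)=1$ for all $\gamma\in\Gamma$. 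On the other hand, $\Delta$ being maximal relative to $\varphi$ gives $\Delta\not\vdashmb\varphi$; because $\Delta$ is closed (Lemma \ref{lem:maxrel->closed}), or directly by reflexivity, this forces $\varphi\notin\Delta$, and hence $v(\varphi)=0$. Thus $v$ is an $\mbst$-valuation verifying all of $\Gamma$ while falsifying $\varphi$, so $\Gamma\not\modelsmb\varphi$, which is exactly the contrapositive of the claim.

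There is no genuine obstacle remaining at this stage, precisely because the two hard pieces have already been discharged earlier: that a set maximal relative to $\varphi$ can always be found extending $\Gamma$ (the Zorn's-lemma argument of Lemma \ref{lem:Lind-Asser}, which crucially uses finitariness) and that the characteristic function of such a set satisfies all five clauses of Definition \ref{def:semantics} (Lemma \ref{lem:mbst-val}). If I had to flag where the real content lives, it is in clause (v) of that earlier verification — showing that $v(\fstar\alpha)=1$ whenever $v(\alpha)=v(\neg\alpha)=0$ — which is what leans on the included-middle axiom (Axiom 11) and is what makes $\mbst$-valuations, rather than arbitrary classical valuations, the correct semantic objects. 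Since that verification is behind us, the completeness theorem reduces to the short bookkeeping assembly sketched above.
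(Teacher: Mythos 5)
Your proof is correct and follows exactly the paper's own argument: contrapose, apply the Lindenbaum--Asser lemma (Lemma \ref{lem:Lind-Asser}) to get $\Delta\supseteq\Gamma$ maximal relative to $\varphi$, use Lemma \ref{lem:mbst-val} to turn the characteristic function of $\Delta$ into an $\mbst$-valuation, and conclude $v(\varphi)=0$ from $\varphi\notin\Delta$ via reflexivity. No gaps; your remarks on where the substantive work lies (clause (v) of Lemma \ref{lem:mbst-val} and Axiom 11) are accurate as well.
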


\begin{proof}
    Suppose $\Gamma\not\vdashmb\varphi$. By Remark \ref{rem:mbst-Tarskian/finitary}, $\mbst$ is Tarskian and finitary. So, by Lemma \ref{lem:Lind-Asser}, there exists $\Delta\supseteq\Gamma$ that is maximal relative to $\varphi$ in $\mbst$. Then, by Lemma \ref{lem:mbst-val}, there is an $\mbst$-valuation $v$ such that $v(\psi)=1$ for all $\psi\in\Delta$, and hence, for all $\psi\in\Gamma$. Now, as $\Delta$ is maximal relative to $\varphi$, $\Delta\not\vdashmb\varphi$. Since $\mbst$ is reflexive, this implies that $\varphi\notin\Delta$. Thus, $v(\varphi)=0$. Hence, $\Gamma\not\modelsmb\varphi$.
\end{proof}

\begin{remark}
    As mentioned in Remark \ref{rem:tt}, the above Completeness theorem and the observations there prove that $\mbst$ satisfies all the conditions of Definition \ref{def:LFU}, and thus, is indeed an LFU.
\end{remark}

\section{Paracomplete probability}
The connection between logic and probability has a long history. More specifically, the links between non-classical thinking and probability have been discussed in \cite{Williams2016}. This article talks about probability theory based on both paraconsistent and paracomplete logics. There are other articles available on paraconsistent probability theories (see \cite{bueno2016paraconsistent} for a discussion on this). As mentioned earlier, we take the paracomplete route in this article. In contrast to similar approaches in \cite{morgan1983probabilistic} and \cite{weatherson2003classical}, where intuitionistic logic has been employed, we use the LFU $\mbst$ as the base logic.

In this section, we attach probabilities to sentences of $\mbst$ much as in Section 3 of \cite{bueno2016paraconsistent}. A discussion on probabilities defined on sets can be found in a later section.

\begin{definition}
    Suppose $\mathcal{S}=\langle\lang,\vdashs\rangle$ is a logic. A \emph{probability function for the logic} $\mathcal{S}$, or an \emph{$\mathcal{S}$-probability function}, is a function $P:\lang\to[0,1]$ such that for any $\varphi,\psi\in\lang$, $P$ satisfies the following conditions.
    \begin{enumerate}[label=(\roman*)]
        \item If $\vdashs\varphi$, then $P(\varphi)=1$. (\emph{Tautologicity})
        \item If $\varphi\vdashs$ (i.e., $\varphi\vdashs\psi$ for all $\psi\in\lang$), then $P(\varphi)=0$. (\emph{Antitautologicity})
        \item If $\psi\vdashs\varphi$, then $P(\psi)\leq P(\varphi)$. (\emph{Comparison})
        \item $P(\varphi\lor\psi)=P(\varphi)+P(\psi)-P(\varphi\land\psi)$. (\emph{Finite additivity})
    \end{enumerate}
\end{definition} 

It is clear that by changing the logic $\mathcal{S}$, we can obtain different probability functions using these same meta-axioms. The case for classical logic is well-known. The intuitionistic case was dealt with in \cite{weatherson2003classical} and the case for the logic $\ci$ was considered in \cite{bueno2016paraconsistent}. In this article, we take the underlying logic to be the LFU $\mbst$. Some consequences of this choice are listed in the following remark.

\begin{remark}\label{rem:ParacompleteProbRules}
Suppose $P$ is an $\mbst$-probability function, where $\mbst=\langle\lang,\vdashmb\rangle$, as before. Then the following can easily be concluded.
\begin{enumerate}[label=(\roman*)]
    \item $P(\alpha\land\neg\alpha)=0$ for any $\alpha\in\lang$.

    To see this, we note that by Axioms 4 and 5 of $\mbst$, $\vdashmb(\alpha\land\neg\alpha)\limp\alpha$ and $\vdashmb(\alpha\land\neg\alpha)\limp\neg\alpha$. Hence, by the Deduction theorem (Theorem \ref{thm:DedThm}), $\alpha\land\neg\alpha\vdash\alpha$ and $\alpha\land\neg\alpha\vdash\neg\alpha$. Now, by Axiom 10 and MP, $\{\alpha,\neg\alpha\}\vdashmb\beta$ for all $\beta\in\lang$. Thus, by transitivity, $\alpha\land\neg\alpha\vdashmb\beta$ for all $\beta\in\lang$. Hence, by antitautologicity, it follows that $P(\alpha\land\neg\alpha)=0$.
    
    \item $P(\alpha\lor\neg\alpha\lor\fstar\alpha)=1$ for any $\alpha\in\lang$.

    This follows from Axiom 11 of $\mbst$ and tautologicity.
    
    \item $P(\alpha\lor\neg\alpha)=P(\alpha)+P(\neg\alpha)$ for any $\alpha\in\lang$. 

    This follows by finite additivity and (i) above.

    \item If $\alpha\vdashmb\beta$ and $\beta\vdashmb\alpha$ for some $\alpha,\beta\in\lang$, i.e., $\alpha,\beta$ are logically equivalent in $\mbst$, then $P(\alpha)=P(\beta)$.

    This follows by comparison. Clearly, this property is not special to $\mbst$-probability functions and can be derived from the definition of a probability function regardless of the logic.
\end{enumerate}
\end{remark}

\begin{lemma}\label{lem:comparison}
    Suppose $P$ is an $\mbst$-probability function, where $\mbst=\langle\lang,\vdashmb\rangle$. Let $\alpha_1,\alpha_2,\beta\in\lang$ such that $P(\alpha_1)=P(\alpha_2)=1$ and $\{\alpha_1,\alpha_2\}\vdashmb\beta$. Then, $P(\beta)=1$.
\end{lemma}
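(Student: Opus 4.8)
The plan is to reduce the two-premise entailment $\{\alpha_1,\alpha_2\}\vdashmb\beta$ to a single-premise entailment, since the Comparison axiom is stated only for a single premise on the left. The natural vehicle is the conjunction $\alpha_1\land\alpha_2$: I would first certify that $P(\alpha_1\land\alpha_2)=1$, then show $\alpha_1\land\alpha_2\vdashmb\beta$, and finally feed this into Comparison.

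First I would establish $P(\alpha_1\land\alpha_2)=1$. Comparison alone yields only the upper bound $P(\alpha_1\land\alpha_2)\le P(\alpha_1)=1$ (using $\alpha_1\land\alpha_2\vdashmb\alpha_1$, which follows from Axiom 4 and the Deduction theorem, Theorem \ref{thm:DedThm}), so the lower bound has to come from finite additivity. Rearranging the additivity identity gives $P(\alpha_1\land\alpha_2)=P(\alpha_1)+P(\alpha_2)-P(\alpha_1\lor\alpha_2)=2-P(\alpha_1\lor\alpha_2)$, and since $P$ takes values in $[0,1]$ we have $P(\alpha_1\lor\alpha_2)\le 1$, whence $P(\alpha_1\land\alpha_2)\ge 1$. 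Combined with the upper bound, this forces $P(\alpha_1\land\alpha_2)=1$.

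Next I would show $\alpha_1\land\alpha_2\vdashmb\beta$. From Axioms 4 and 5 together with the Deduction theorem one gets $\alpha_1\land\alpha_2\vdashmb\alpha_1$ and $\alpha_1\land\alpha_2\vdashmb\alpha_2$. Since $\mbst$ is Tarskian (Remark \ref{rem:mbst-Tarskian/finitary}), I would invoke Transitivity/Cut with $\Delta=\{\alpha_1,\alpha_2\}$ and $\Gamma=\{\alpha_1\land\alpha_2\}$: as $\Gamma\vdashmb\delta$ for every $\delta\in\Delta$ and $\Delta\vdashmb\beta$ by hypothesis, Cut delivers $\alpha_1\land\alpha_2\vdashmb\beta$. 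Applying Comparison to this single-premise entailment gives $P(\alpha_1\land\alpha_2)\le P(\beta)$, so $1\le P(\beta)$, and since $P(\beta)\le 1$ we conclude $P(\beta)=1$.

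I do not expect a serious obstacle here; the one non-routine point worth flagging is exactly the mismatch between the two-premise entailment in the hypothesis and the single-premise form of Comparison. The resolution is to package both premises into their conjunction, which is legitimate only because one can first guarantee $P(\alpha_1\land\alpha_2)=1$, and that step genuinely requires finite additivity rather than mere monotonicity.
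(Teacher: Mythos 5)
Your proof is correct and follows essentially the same route as the paper's: package the premises into $\alpha_1\land\alpha_2$, show $P(\alpha_1\land\alpha_2)=1$ via finite additivity, pass from $\{\alpha_1,\alpha_2\}\vdashmb\beta$ to $\alpha_1\land\alpha_2\vdashmb\beta$ by Cut, and finish with Comparison. The only (immaterial) difference is that the paper first establishes $P(\alpha_1\lor\alpha_2)=1$ exactly via Comparison and $\alpha_1\vdashmb\alpha_1\lor\alpha_2$, whereas you only use the codomain bound $P(\alpha_1\lor\alpha_2)\le 1$ in the additivity identity.
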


\begin{proof}
    Since $\alpha_1\limp(\alpha_1\lor\alpha_2)$ is an axiom of $\mbst$ (Axiom 6), by the Deduction theorem (Theorem \ref{thm:DedThm}), $\alpha_1\vdashmb\alpha_1\lor\alpha_2$. So, by comparison, $P(\alpha_1)\le P(\alpha_1\lor\alpha_2)$. Now, as $P(\alpha_1)=1$, this implies that $P(\alpha_1\lor\alpha_2)=1$. By finite additivity, $P(\alpha_1\lor\alpha_2)=P(\alpha_1)+P(\alpha_2)-P(\alpha_1\land\alpha_2)$. Then, by substituting in the values of $P(\alpha_1),P(\alpha_2)$, and $P(\alpha_1\lor\alpha_2)$, we get $P(\alpha_1\land\alpha_2)=1$. Now, as $(\alpha_1\land\alpha_2)\limp\alpha_1$ and $(\alpha_1\land\alpha_2)\limp\alpha_2$ are axioms of $\mbst$ (Axioms 4 and 5), by the Deduction theorem again, $\alpha_1\land\alpha_2\vdashmb\alpha_1$ and $\alpha_1\land\alpha_2\vdashmb\alpha_2$. So, since $\{\alpha_1,\alpha_2\}\vdashmb\beta$, by transitivity, $\alpha_1\land\alpha_2\vdashmb\beta$. Hence, by comparison, $P(\alpha_1\land\alpha_2)\le P(\beta)$. So, $P(\beta)=1$.
\end{proof}

\begin{corollary}\label{cor:comparison}
    Suppose $P$ is an $\mbst$-probability function, where $\mbst=\langle\lang,\vdashmb\rangle$. Let $\alpha_1,\ldots$, $\alpha_m,\beta\in\lang$ such that $P(\alpha_1)=\cdots=P(\alpha_m)=1$ and $\{\alpha_1,\ldots,\alpha_m\}\vdashmb\beta$. Then, $P(\beta)=1$.
\end{corollary}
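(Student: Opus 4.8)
The plan is to prove Corollary \ref{cor:comparison} by induction on $m$, using Lemma \ref{lem:comparison} as the key tool for combining hypotheses two at a time. The base case $m=1$ is almost immediate: if $P(\alpha_1)=1$ and $\{\alpha_1\}\vdashmb\beta$, then by comparison (condition (iii) in the definition of a probability function) $P(\alpha_1)\le P(\beta)$, forcing $P(\beta)=1$. (One could alternatively take $m=2$ as the base case, which is precisely the statement of Lemma \ref{lem:comparison}.)

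For the inductive step, I would assume the result holds for any collection of $m-1$ formulas and prove it for $m$. Suppose $P(\alpha_1)=\cdots=P(\alpha_m)=1$ and $\{\alpha_1,\ldots,\alpha_m\}\vdashmb\beta$. The natural move is to apply the Deduction theorem (Theorem \ref{thm:DedThm}) to discharge one hypothesis: from $\{\alpha_1,\ldots,\alpha_m\}\vdashmb\beta$ we obtain $\{\alpha_1,\ldots,\alpha_{m-1}\}\vdashmb\alpha_m\limp\beta$. Now I would like to conclude something about $P(\alpha_m\limp\beta)$, but here lies the main subtlety: the inductive hypothesis gives $P(\alpha_m\limp\beta)=1$ only if $\alpha_m\limp\beta$ plays the role of the target $\beta$, which it does. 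So applying the induction hypothesis to the $m-1$ formulas $\alpha_1,\ldots,\alpha_{m-1}$ (each of probability $1$) entailing $\alpha_m\limp\beta$ yields $P(\alpha_m\limp\beta)=1$. Having established $P(\alpha_m)=1$ and $P(\alpha_m\limp\beta)=1$, together with the fact that $\{\alpha_m,\,\alpha_m\limp\beta\}\vdashmb\beta$ by Modus Ponens, I can invoke Lemma \ref{lem:comparison} with the two formulas $\alpha_m$ and $\alpha_m\limp\beta$ to conclude $P(\beta)=1$.

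The step I expect to require the most care is verifying that the Deduction theorem applies cleanly and that the reshaped entailment $\{\alpha_1,\ldots,\alpha_{m-1}\}\vdashmb\alpha_m\limp\beta$ is exactly in the form needed to feed into the inductive hypothesis. Since $\mbst$ is Tarskian and finitary (Remark \ref{rem:mbst-Tarskian/finitary}) and the Deduction theorem is available, no obstruction arises; the entailment manipulation is purely syntactic and the probability-theoretic content is entirely absorbed into the two applications of Lemma \ref{lem:comparison} (once implicitly via the inductive hypothesis, once explicitly in the final step). An alternative route avoiding the Deduction theorem would be to prove by a separate induction that $P(\alpha_1\land\cdots\land\alpha_m)=1$ whenever each conjunct has probability $1$ (iterating the $P(\alpha_1\land\alpha_2)=1$ computation from the proof of Lemma \ref{lem:comparison}), then use that the conjunction entails $\beta$ together with comparison; however, the Deduction-theorem-plus-induction approach is cleaner and reuses Lemma \ref{lem:comparison} directly, so that is the route I would present.
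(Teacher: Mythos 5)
Your proof is correct, but it takes a different route from the paper's. The paper disposes of the corollary in one line --- ``this follows by extending the argument in the above lemma'' --- meaning the intended proof is to iterate the internal computation of Lemma \ref{lem:comparison}: show by repeated use of finite additivity, Axiom 6, and comparison that $P(\alpha_1\land\cdots\land\alpha_m)=1$, note that $\alpha_1\land\cdots\land\alpha_m\vdashmb\beta$ by conjunction elimination and transitivity, and finish with comparison. That is precisely the ``alternative route'' you sketch and set aside at the end. Your primary argument instead inducts on $m$ at the level of the corollary itself: the Deduction theorem converts $\{\alpha_1,\ldots,\alpha_m\}\vdashmb\beta$ into $\{\alpha_1,\ldots,\alpha_{m-1}\}\vdashmb\alpha_m\limp\beta$, the inductive hypothesis gives $P(\alpha_m\limp\beta)=1$, and Lemma \ref{lem:comparison} applied to the pair $\{\alpha_m,\ \alpha_m\limp\beta\}$ (which entails $\beta$ by MP) closes the loop. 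Both arguments are sound; you correctly flag and handle the one delicate point in yours, namely that the inductive hypothesis must be quantified over an arbitrary target formula so that it can be applied to $\alpha_m\limp\beta$ rather than to $\beta$. What your route buys is modularity: the lemma is reused as a black box and no new probability-theoretic computation is needed, at the cost of invoking the Deduction theorem (your appeal to finitariness, incidentally, is superfluous, since the premise set is already finite). What the paper's route buys is that it stays entirely inside the probability axioms and yields the stronger intermediate fact $P(\alpha_1\land\cdots\land\alpha_m)=1$, which is the natural quantitative heart of the statement.
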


\begin{proof}
    This follows by extending the argument in the above lemma.
\end{proof}

As discussed in \cite{bueno2016paraconsistent}, probabilities are sometimes seen as generalized truth-values. Given a logic $\mathcal{S}=\langle\lang,\vdashs\rangle$ with valuations interpreting elements of $\lang$ in the 2-element set $\{0,1\}$, $\mathcal{S}$-probability functions may be thought of as extending the valuations. The valuations can, in turn, be seen as degenerate $\mathcal{S}$-probability functions. In such a case, the $\mathcal{S}$-probability functions give rise to a \emph{probabilistic semantics} for the logic $\mathcal{S}$, which is different from the standard truth-valued semantics.

\begin{definition}
Suppose $\mathcal{S}=\langle\lang,\vdashs\rangle$ is a logic. 
Then, a \emph{probabilistic semantic relation} for $\mathcal{S}$ $\probsem\,\subseteq\pow(\lang)\times\lang$ is defined as follows. For any $\Gamma\cup\{\varphi\}\subseteq\lang$, $\Gamma\probsem\varphi$, if for every $\mathcal{S}$-probability function $P$, $P(\varphi)=1$ whenever $P(\psi)=1$ for every $\psi\in \Gamma$.
\end{definition}

The rest of this section is focused on establishing that $\mbst$ is complete (sound and complete) with respect to the probabilistic semantics for $\mbst$.

\begin{theorem}
Suppose $\mbst=\langle\lang,\vdashmb\rangle$ and $\probsem\,\subseteq\pow(\lang)\times\lang$ be the probabilistic semantic relation for $\mbst$. Then, $\Gamma\vdashmb\varphi$ iff $\Gamma\probsem\varphi$.
\end{theorem}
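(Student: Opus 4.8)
The plan is to prove the two directions of the biconditional separately, the whole argument hinging on one observation: every $\mbst$-valuation is itself a degenerate, $\{0,1\}$-valued $\mbst$-probability function. This is what will let me transfer the truth-valued Completeness theorem (Theorem \ref{thm:completeness}) to the probabilistic setting.

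For the direction $\Gamma\vdashmb\varphi\Rightarrow\Gamma\probsem\varphi$, I would first use finitariness (Remark \ref{rem:mbst-Tarskian/finitary}) to extract a finite $\{\alpha_1,\ldots,\alpha_m\}\subseteq\Gamma$ with $\{\alpha_1,\ldots,\alpha_m\}\vdashmb\varphi$. Then for any $\mbst$-probability function $P$ satisfying $P(\psi)=1$ for all $\psi\in\Gamma$, in particular $P(\alpha_1)=\cdots=P(\alpha_m)=1$, so Corollary \ref{cor:comparison} gives $P(\varphi)=1$; the degenerate case of an empty finite subset is handled by tautologicity applied to $\vdashmb\varphi$. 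As $P$ is arbitrary, this yields $\Gamma\probsem\varphi$.

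For the converse $\Gamma\probsem\varphi\Rightarrow\Gamma\vdashmb\varphi$, I would argue by contraposition. Assuming $\Gamma\not\vdashmb\varphi$, the contrapositive of Theorem \ref{thm:completeness} supplies an $\mbst$-valuation $v$ with $v(\psi)=1$ for all $\psi\in\Gamma$ and $v(\varphi)=0$. It then remains to check that $v$, regarded as a map $\lang\to[0,1]$, is an $\mbst$-probability function; granting this, $v$ witnesses $\Gamma\not\probsem\varphi$.

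The main obstacle—indeed the only real content beyond bookkeeping—is verifying that an $\mbst$-valuation satisfies the four probability axioms. Comparison and tautologicity fall straight out of the Soundness theorem (if $\psi\vdashmb\varphi$ and $v(\psi)=1$ then $v(\varphi)=1$, so $v(\psi)\le v(\varphi)$; and $\vdashmb\varphi$ forces $v(\varphi)=1$). For antitautologicity I would note that no valuation is identically $1$—for any variable $p$, condition (iv) of Definition \ref{def:semantics} forces at least one of $v(p),v(\neg p)$ to be $0$—so if $\varphi\vdashmb\psi$ for every $\psi$, soundness forces $v(\varphi)=0$. Finite additivity reduces to a four-case check on the values of $v(\varphi),v(\psi)\in\{0,1\}$ using the valuation clauses for $\lor$ and $\land$, each case confirming $v(\varphi\lor\psi)=v(\varphi)+v(\psi)-v(\varphi\land\psi)$. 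Once these are in place $v$ is a genuine $\mbst$-probability function and the proof is complete.
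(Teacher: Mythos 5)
Your proof is correct, and your forward direction (finitariness, then Corollary \ref{cor:comparison}, with tautologicity covering the empty case) is exactly the paper's. Where you genuinely diverge is the converse. The paper argues directly from $\Gamma\probsem\varphi$ to $\Gamma\modelsmb\varphi$ by establishing that the two-valued $\mbst$-probability functions coincide with the $\mbst$-valuations, and it spends nearly all of its effort on the containment ``every two-valued probability function satisfies clauses (i)--(v) of Definition \ref{def:semantics},'' while dismissing the reverse containment as straightforward. You proceed by contraposition: from $\Gamma\not\vdashmb\varphi$, Theorem \ref{thm:completeness} hands you a separating valuation $v$, and you then verify only the reverse containment --- that every valuation is a two-valued probability function (comparison and tautologicity via soundness, antitautologicity via the observation that condition (iv) prevents any valuation from being identically $1$, finite additivity by a four-case check). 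This is in fact the only containment that carries logical weight even in the paper's own argument: to pass from $\probsem$ to $\modelsmb$ one needs precisely that every valuation is admissible as a probability function, so your route is leaner and isolates the load-bearing lemma that the paper leaves implicit. What the paper's longer verification buys in exchange is the stronger standalone fact that the two-valued $\mbst$-probability functions are \emph{exactly} the $\mbst$-valuations, which is of independent conceptual interest (probability functions as generalized valuations) but is not needed for the equivalence itself.
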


\begin{proof}
Suppose $\Gamma\vdashmb\varphi$. Let $P$ be any $\mbst$-probability function such that $P(\psi)=1$ for all $\psi\in\Gamma$.

Now, as $\mbst$ is finitary, there exists a finite $\Gamma_{0}\subseteq\Gamma$ such that $\Gamma_0\vdashmb\varphi$.

If $\Gamma_0=\emptyset$, i.e., $\vdashmb\varphi$, then $P(\varphi)=1$, by tautologicity.

Suppose $\Gamma_0\neq\emptyset$. Let $\Gamma_0=\{\psi_1,\ldots,\psi_m\}$. Since $\Gamma_0\subseteq\Gamma$, $P(\psi_i)=1$ for all $1\leq i\leq m$. Then, by Corollary \ref{cor:comparison}, $P(\varphi)=1$. Thus, $\Gamma\probsem\varphi$.

Conversely, suppose $\Gamma\probsem\varphi$.

Since $\Gamma\probsem\varphi$, for each $\mbst$-probability function $P$, $P(\varphi)=1$ whenever $P(\psi)=1$ for all $\psi\in\Gamma$. So, in particular, this holds for any $\mbst$-probability function $P^\prime$ with $P^\prime(\lang)\subseteq\{0,1\}$. We claim that any such two-valued $\mbst$-probability function $P^\prime$ is an $\mbst$-valuation. This is established as follows. Let $\alpha,\beta\in\lang$.

\begin{enumerate}[label=(\roman*)]
\item $P^\prime(\alpha\lor\beta)=1$ iff $P^\prime(\alpha)=1$ or $P^\prime(\beta)=1$. 

Suppose $P^\prime(\alpha\lor\beta)=1$ but $P^\prime(\alpha),P^\prime(\beta)\neq1$. So, $P^\prime(\alpha)=P^\prime(\beta)=0$. Then, by finite additivity, $P^\prime(\alpha\lor\beta)=P^\prime(\alpha)+P^\prime(\beta)-P^\prime(\alpha\land\beta)$, which implies that $P^\prime(\alpha\land\beta)=-1$. This is not possible. Hence, either $P^\prime(\alpha)=1$ or $P^\prime(\beta)=1$.

Conversely, suppose either $P^\prime(\alpha)=1$ or $P^\prime(\beta)=1$. By similar arguments as in the proof of Lemma \ref{lem:comparison}, $\alpha\vdashmb\alpha\lor\beta$ and $\beta\vdashmb\alpha\lor\beta$. So, by comparison, $P^\prime(\alpha)\leq P^\prime(\alpha\lor\beta)$ and $P^\prime(\beta)\leq P^\prime(\alpha\lor\beta)$. Since either $P^\prime(\alpha)=1$ or $P^\prime(\beta)=1$, $P^\prime(\alpha\lor\beta)=1$.

\item $P^\prime(\alpha\land\beta)=1$ iff $P^\prime(\alpha)=1$ and $P^\prime(\beta)=1$.

Suppose $P^\prime(\alpha\land\beta)=1$. By similar arguments as in the proof of Lemma \ref{lem:comparison}, $\alpha\land\beta\vdashmb\alpha$ and $\alpha\land\beta\vdashmb\beta$. So, by comparison, $P^\prime(\alpha\land\beta)\leq P^\prime(\alpha)$ and $P^\prime(\alpha\land\beta)\leq P^\prime(\beta)$. This implies that $P^\prime(\alpha)=1$ and $P^\prime(\beta)=1$.

Conversely, suppose $P^\prime(\alpha)=P^\prime(\beta)=1$. Then, again as $\alpha\vdashmb\alpha\lor\beta$, by comparison, $P^\prime(\alpha)\leq P^\prime(\alpha\lor\beta)$. So, $P^\prime(\alpha\lor\beta)=1$. By finite additivity, $P^\prime(\alpha\lor\beta)=P^\prime(\alpha)+P
^\prime(\beta)-P^\prime(\alpha\land\beta)$. Now, by substituting the values of $P^\prime(\alpha),P^\prime(\beta),P^\prime(\alpha\lor\beta)$, we get $P^\prime(\alpha\land\beta)=1$.

\item $P^\prime(\alpha\limp\beta)=1$ iff $P^\prime(\alpha)=0$ or $P^\prime(\beta)=1$.

Suppose $P^\prime(\alpha\limp\beta)=1$. By finite additivity, $P^\prime(\alpha\lor(\alpha\limp\beta))=P^\prime(\alpha)+P^\prime(\alpha\limp\beta)-P^\prime(\alpha\land(\alpha\limp\beta))$.

Since $\alpha \lor (\alpha\limp\beta)$ is an axiom of $\mbst$ (Axiom 9), by tautologicity, $P^\prime(\alpha \lor (\alpha\limp\beta))=1$. Thus, $P^\prime(\alpha\land(\alpha\limp\beta))= P^\prime(\alpha)+P^\prime(\alpha\limp\beta)-1=P^\prime(\alpha)$. Now, it is straightforward to see that $\alpha\land(\alpha\limp\beta)\vdashmb\beta$. Hence, by comparison, $P^\prime(\alpha\land(\alpha\limp\beta))\leq P^\prime(\beta)$. Thus, $P^\prime(\alpha)\le P^\prime(\beta)$. So, if $P^\prime(\alpha)\neq0$, i.e., $P^\prime(\alpha)=1$, then $P^\prime(\beta)=1$. Hence, either $P^\prime(\alpha)=0$ or $P^\prime(\beta)=1$.

Conversely, suppose either $P^\prime(\alpha)=0$ or $P^\prime(\beta)=1$.

Suppose $P^\prime(\alpha)=0$. Again as $P^\prime(\alpha\lor(\alpha\limp\beta))=P^\prime(\alpha)+P^\prime(\alpha\limp\beta)-P^\prime(\alpha\land(\alpha\limp\beta))$ and $P^\prime(\alpha\lor(\alpha\limp\beta))=1$, we have $P^\prime(\alpha\limp\beta)=P^\prime(\alpha\land(\alpha\limp\beta))-P^\prime(\alpha)+1$.

Now, since $P^\prime(\alpha)=0$ and $\alpha\land(\alpha\limp\beta)\vdashmb\alpha$, by comparison, $P^\prime(\alpha\land(\alpha\limp\beta))=0$. Thus, $P^\prime(\alpha\limp\beta)=1$.
 
On the other hand, suppose $P^\prime(\beta)=1$. Now, as $\beta\limp(\alpha\limp\beta)$ is an axiom of $\mbst$ (Axiom 1), by the Deduction theorem (Theorem \ref{thm:DedThm}), $\beta\vdashmb\alpha\limp\beta$. Hence, by comparison, $P^\prime(\alpha\limp\beta)=1$.

\item If $P^\prime(\alpha)= 1$, then $P^\prime(\neg\alpha)=0$.

Suppose $P^\prime(\alpha)=1$ and $P^\prime(\neg\alpha)\neq0$. Then, $P^\prime(\neg\alpha)=1$. Now, by Remark \ref{rem:ParacompleteProbRules} (iii), $P^\prime(\alpha\lor\neg\alpha)=P^\prime(\alpha)+P^\prime(\neg\alpha)=2$. This is not possible. Hence, $P^\prime(\neg\alpha)=0$.

\item If $P^\prime(\alpha)=0$ and $P^\prime(\neg \alpha)=0$, then $P^\prime(\fstar\alpha)=1$.

Suppose $P^\prime(\alpha)=P^\prime(\neg\alpha)=0$ but $P^\prime(\fstar\alpha)\neq1$. Then, $P^\prime(\fstar\alpha)=0$.

Now, by the same arguments as in the proof of Lemma \ref{lem:comparison}, $\fstar\alpha\vdashmb\alpha\lor\neg\alpha\lor\fstar\alpha$, and hence, by comparison, $P^\prime(\alpha\lor\neg\alpha\lor\fstar\alpha)=0$. However, by Remark \ref{rem:ParacompleteProbRules} (ii), $P^\prime(\alpha\lor\neg\alpha\lor\fstar\alpha)=1$. This is a contradiction. Thus, $P^\prime(\fstar\alpha)=1$.
\end{enumerate}

Now, it is straightforward to see that any $\mbst$-valuation is a two-valued $\mbst$-probability function. Thus, the $\mbst$-valuations are the same as the two-valued $\mbst$-probability function. So, from $\Gamma\probsem\varphi$, we can now conclude that $\Gamma\modelsmb\varphi$. Hence, by Theorem \ref{thm:completeness}, $\Gamma\vdashmb\varphi$.
\end{proof}

We next discuss the following \emph{theorem of total paracomplete probability} generalizing the classical theorem of total probability. This also dualizes the theorem of total paraconsistent probability in \cite{bueno2016paraconsistent}.

\begin{theorem}\label{thm:totalparacompleteprob}
Suppose $\mbst=\langle\lang,\vdashmb\rangle$, as before. Let $\alpha,\beta\in\lang$ and $P$ be any $\mbst$-probability function. Then, 
$P(\beta)=P(\beta\land\alpha)+P(\beta\land\neg\alpha)+P(\beta\land\fstar\alpha)-P(\beta\land(\alpha\lor\neg\alpha)\land\fstar\alpha)$.
\end{theorem}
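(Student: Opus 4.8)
The plan is to derive the identity from two applications of finite additivity, using that the \emph{included middle} formula $\alpha\lor\neg\alpha\lor\fstar\alpha$ is a theorem of $\mbst$ (Axiom 11) together with the distributive laws, which are at our disposal because Axioms (1)--(9) are precisely the axioms of CPL$^+$ (Remark \ref{rem:axioms}).

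First I would observe that $\beta$ and $\beta\land(\alpha\lor\neg\alpha\lor\fstar\alpha)$ are logically equivalent in $\mbst$. One direction is immediate from Axiom 4. For the other, since $\vdashmb\alpha\lor\neg\alpha\lor\fstar\alpha$, monotonicity gives $\beta\vdashmb\alpha\lor\neg\alpha\lor\fstar\alpha$, and then Axiom 3 with two applications of MP yields $\beta\vdashmb\beta\land(\alpha\lor\neg\alpha\lor\fstar\alpha)$. By Remark \ref{rem:ParacompleteProbRules}(iv), logically equivalent formulas have equal probability, so $P(\beta)=P(\beta\land(\alpha\lor\neg\alpha\lor\fstar\alpha))$.

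Next I would regroup the right-hand formula, by distributivity, as $(\beta\land(\alpha\lor\neg\alpha))\lor(\beta\land\fstar\alpha)$ and apply finite additivity, noting that the meet of the two disjuncts reduces (again by distributivity and idempotence) to $\beta\land(\alpha\lor\neg\alpha)\land\fstar\alpha$. This gives
\[
P(\beta)=P(\beta\land(\alpha\lor\neg\alpha))+P(\beta\land\fstar\alpha)-P(\beta\land(\alpha\lor\neg\alpha)\land\fstar\alpha),
\]
where the last summand is already the correction term in the statement. It then remains to expand $P(\beta\land(\alpha\lor\neg\alpha))$. Distributing once more, $\beta\land(\alpha\lor\neg\alpha)$ is equivalent to $(\beta\land\alpha)\lor(\beta\land\neg\alpha)$, so finite additivity yields
\[
P(\beta\land(\alpha\lor\neg\alpha))=P(\beta\land\alpha)+P(\beta\land\neg\alpha)-P(\beta\land\alpha\land\neg\alpha).
\]
Here $P(\beta\land\alpha\land\neg\alpha)=0$: since $\beta\land\alpha\land\neg\alpha\vdashmb\alpha\land\neg\alpha$ and, as established in Remark \ref{rem:ParacompleteProbRules}(i), $\alpha\land\neg\alpha$ entails every formula, transitivity and antitautologicity force this probability to vanish. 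Substituting back produces the claimed identity.

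The only genuinely non-routine point, which I expect to be the main thing to watch, is justifying that the distributive rewrites really hold in $\mbst$. Because each of them manipulates only $\land$ and $\lor$ (treating $\neg\alpha$ and $\fstar\alpha$ as opaque subformulas), they are derivable already in CPL$^+$ and hence in $\mbst$ by Remark \ref{rem:axioms}, so Remark \ref{rem:ParacompleteProbRules}(iv) applies and the corresponding probabilities coincide. The sole negation-specific input is the vanishing of the contradictory conjunction $\beta\land\alpha\land\neg\alpha$, which is handled exactly as in Remark \ref{rem:ParacompleteProbRules}(i); no analogous cancellation is needed for $\fstar\alpha$, which is why the $\fstar$-term survives as the final correction.
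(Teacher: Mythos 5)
Your proof is correct and follows the same overall decomposition as the paper's: reduce $P(\beta)$ to $P(\beta\land(\alpha\lor\neg\alpha\lor\fstar\alpha))$ using Axiom 11, rewrite by distributivity, apply finite additivity twice, and eliminate the contradictory conjunct $\beta\land\alpha\land\neg\alpha$. The one place you genuinely diverge is in how the distributive/commutative/associative rewrites are licensed. The paper establishes them \emph{semantically}: it observes that $\mbst$-valuations treat $\land$ and $\lor$ classically, concludes via the Completeness theorem (Theorem \ref{thm:completeness}) that the relevant pairs of formulas are mutually derivable, and then applies Remark \ref{rem:ParacompleteProbRules}(iv). You instead derive them \emph{syntactically}, as substitution instances of CPL$^+$ theorems available in $\mbst$ by Remark \ref{rem:axioms} (treating $\neg\alpha$ and $\fstar\alpha$ as opaque subformulas), which is a sound argument since $\mbst$ contains all CPL$^+$ axioms, MP, and is structural. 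Your route is more elementary in that the total probability theorem then does not depend on the completeness machinery at all, at the cost of implicitly appealing to Hilbert-style derivations of distributivity in CPL$^+$ that one would normally want to spell out or cite; the paper trades those derivations for the already-proved completeness theorem. A second, minor difference: you dispose of $P(\beta\land\alpha\land\neg\alpha)$ by showing the formula is antitautological (cut through $\alpha\land\neg\alpha$, then antitautologicity), whereas the paper uses comparison against $P(\alpha\land\neg\alpha)=0$ from Remark \ref{rem:ParacompleteProbRules}(i); both are correct.
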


\begin{proof} 
We first claim that $P(\beta)=P(\beta\land(\alpha\lor\neg\alpha\lor\fstar\alpha))$. The following derivation in $\mbst$ shows that $\vdashmb\beta\limp((\alpha\lor\neg\alpha\lor\fstar\alpha)\land\beta)$.
\[
\begin{array}{rll}
1.&\alpha\lor\neg\alpha\lor\fstar\alpha&(\hbox{Axiom 11})\\
2.&(\alpha\lor\neg\alpha\lor\fstar\alpha)\limp (\beta\limp((\alpha\lor\neg\alpha\lor\fstar\alpha)\land\beta))&(\hbox{Axiom 3})\\
3.&\beta\limp((\alpha\lor\neg\alpha\lor\fstar\alpha)\land\beta)&\hbox{(MP on (1) \& (2))}
\end{array}
\]
Hence, by the Deduction theorem (Theorem \ref{thm:DedThm}), $\beta\vdashmb(\alpha\lor\neg\alpha\lor\fstar\alpha)\land\beta$.

Now, $\vdashmb((\alpha\lor\neg\alpha\lor\fstar\alpha)\land\beta)\limp\beta$ (Axiom 5). So, by the Deduction theorem again, $(\alpha\lor\neg\alpha\lor\fstar\alpha)\land\beta\vdashmb\beta$. Hence, by Remark \ref{rem:ParacompleteProbRules} (iv), $P(\beta)=P((\alpha\lor\neg\alpha\lor\fstar\alpha)\land\beta)$.

As noted earlier in Remark \ref{rem:val}, the truth conditions for $\land$ and $\lor$ in $\mbst$ are the same as in CPL$^+$. Hence, $\land$ \& $\lor$ in $\mbst$ obey the same laws as in CPL$^+$. Thus, for any $\varphi,\psi,\theta\in\lang$, and $\mbst$-valuation $v$,
\[
v((\psi\lor\theta)\land\varphi)=v((\varphi\land\psi)\lor(\varphi\land\theta)).
\]
So, by the Completeness theorem (Theorem \ref{thm:completeness}), for any  $\varphi,\psi,\theta\in\lang$,
\[
(\psi\lor\theta)\land\varphi\vdashmb(\varphi\land\psi)\lor(\varphi\land\theta)\quad\hbox{ and }\quad(\varphi\land\psi)\lor(\varphi\land\theta)\vdashmb(\psi\lor\theta)\land\varphi.
\]
Hence, for by Remark \ref{rem:ParacompleteProbRules} (iv), for any $\mbst$-probability function $P$, 
\begin{equation}\label{eqn:1}
   P((\psi\lor\theta)\land\varphi)=P((\varphi\land\psi)\lor(\varphi\land\theta)). 
\end{equation}
By similar arguments as above, we have, for any $\varphi,\psi\in\lang$, and $\mbst$-probability function $P$,
  \begin{equation}\label{eqn:2}
      P(\varphi\land\psi)=P(\psi\land\varphi),\hbox{ and}
  \end{equation}
  \begin{equation}\label{eqn:3}
      P((\varphi\land\psi)\land(\varphi\land\theta))=P(\varphi\land\psi\land\theta).
  \end{equation}
Thus, $P(\beta)=P((\alpha\lor\neg\alpha\lor\fstar\alpha)\land\beta)$
\[
\begin{array}{lll}
     =&P((\beta\land(\alpha\lor\neg\alpha))\lor(\beta\land\fstar\alpha))&\hbox{(By Equation (\ref{eqn:1}))} \\
     =&P(\beta\land(\alpha\lor\neg\alpha))+P(\beta\land\fstar\alpha)-P((\beta\land(\alpha\lor\neg\alpha))\land(\beta\land\fstar\alpha))&\hbox{(By finite additivity)}\\
     =&P(\beta\land(\alpha\lor\neg\alpha))+P(\beta\land\fstar\alpha)-P(\beta\land(\alpha\lor\neg\alpha)\land\fstar\alpha)&\hbox{(By Equation (\ref{eqn:3}))}\\
\end{array}
\]
Now, $P(\beta\land(\alpha\lor\neg\alpha))$
\[
\begin{array}{lll}
     =&P((\alpha\lor\neg\alpha)\land\beta)&\hbox{(By Equation (\ref{eqn:2}))}\\
     =&P((\beta\land\alpha)\lor(\beta\land\neg\alpha))&\hbox{(By Equation (\ref{eqn:1}))}\\
     =&P(\beta\land\alpha)+P(\beta\land\neg\alpha)-P((\beta\land\alpha)\land(\beta\land\neg\alpha))&\hbox{(By finite additivity)}\\
     =&P(\beta\land\alpha)+P(\beta\land\neg\alpha)-P(\beta\land\alpha\land\neg\alpha)&\hbox{(By Equation (\ref{eqn:3}))}
\end{array}
\]
Since $\vdashmb(\beta\land(\alpha\land\neg\alpha))\limp(\alpha\land\neg\alpha)$ (Axiom 5), by the Deduction theorem (Theorem \ref{thm:DedThm}), we have $\beta\land(\alpha\land\neg\alpha)\vdashmb\alpha\land\neg\alpha$. So, by comparison, $P(\beta\land\alpha\land\neg\alpha)\le P(\alpha\land\neg\alpha)$. By Remark \ref{rem:ParacompleteProbRules} (i), $P(\alpha\land\neg\alpha)=0$. Thus, $P(\beta\land\alpha\land\neg\alpha)=0$. So, $P(\beta\land(\alpha\lor\neg\alpha))=P(\beta\land\alpha)+P(\beta\land\neg\alpha)$. Hence, 
\[
P(\beta)=P(\beta\land\alpha)+P(\beta\land\neg\alpha)+P(\beta\land\fstar\alpha)-P(\beta\land(\alpha\lor\neg\alpha)\land\fstar\alpha).
\]
The above equation can also be rearranged into following using the equations (\ref{eqn:1}) and (\ref{eqn:2}) in the suitable order the required number of times.
\[
P(\beta)=P(\beta\land\alpha)+P(\beta\land\neg\alpha)+P(\beta\land\fstar\alpha)-P((\beta\land\alpha\land\fstar\alpha)\lor(\beta\land\neg\alpha\fstar\alpha)).
\]
\end{proof}
\begin{remark}
The above theorem goes to show how the missing evidence case plays a crucial role in probabilities based on $\mbst$. In the case of classical probability, the law of excluded middle holds, i.e., there is no missing evidence case. This becomes a special case of the above when there is no missing evidence for an $\alpha$. In such a situation, $P(\fstar\alpha)=0$. Then, as $\beta\land\fstar\alpha\vdashmb\fstar\alpha$ and $\beta\land(\alpha\lor\neg\alpha)\land\fstar\alpha\vdashmb\fstar\alpha$, by comparison, $P(\beta\land\fstar\alpha)\le P(\fstar\alpha)=P(\beta\land(\alpha\lor\neg\alpha)\land\fstar\alpha)=0$. Hence, the equation of total probability in the above theorem reduces to the standard equation of total probability, i.e., $P(\beta)=P(\beta\land\alpha)+P(\beta\land\neg\alpha)$.
\end{remark}

We can now use the above theorem to obtain a paracomplete version of Bayes' rule which allows the updation of probabilities in light of new information. In the current paracomplete case, this updation can take place in the presence of undeterminedness or conclusive evidence backing all pieces of information.

As usual, the conditional probability of a statement $\alpha$, given statement $\beta$, is defined as follows.
\[
P(\alpha\mid\beta)=\dfrac{P(\alpha\land\beta)}{P(\beta)},\quad\hbox{provided }P(\beta)\neq0.
\]
Then, the classical Bayes' rule is stated as follows.
\[
P(\alpha\mid\beta)=\dfrac{P(\beta\mid\alpha)\cdot P(\alpha)}{P(\beta)},\quad\hbox{provided }P(\beta)\neq0.
\]
$P(\alpha\mid\beta)$ here denotes the posterior probability of $\alpha$ -- the probability of $\alpha$ after $\beta$ is observed, while $P(\alpha)$ denotes the probability of $\alpha$ prior to this. Then, the following theorem follows from the above theorem of total paracomplete probability, Theorem \ref{thm:totalparacompleteprob}. This may be called the \emph{paracomplete Bayes' conditionalization rule}.

\begin{theorem}\label{thm:Bayes}
Suppose $\mbst=\langle\lang,\vdashmb\rangle$, as before. Let $\alpha,\beta\in\lang$ and $P$ be any $\mbst$-probability function. If $P(\alpha),P(\neg\alpha),P((\alpha\lor\neg\alpha)\land\fstar\alpha)$ and $P(\beta)$ are all non-zero, then the following equality holds.
    \[
    P(\alpha\mid\beta)=\dfrac{P(\beta\mid\alpha)\cdot P(\alpha)}{P(\beta\mid\alpha)\cdot P(\alpha)+P(\beta\mid\neg\alpha)\cdot P(\neg\alpha)+P(\beta\mid\fstar\alpha)\cdot P(\fstar\alpha)-K}\,,
    \]
    where $K=P(\beta\mid((\alpha\lor\neg\alpha)\land\fstar\alpha))\cdot P((\alpha\lor\neg\alpha)\land\fstar\alpha)$.
\end{theorem}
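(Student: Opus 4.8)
The plan is to derive the paracomplete Bayes' rule as a direct combination of the definition of conditional probability with the theorem of total paracomplete probability (Theorem \ref{thm:totalparacompleteprob}). The guiding idea is exactly the classical one: start from $P(\alpha\mid\beta)=P(\alpha\land\beta)/P(\beta)$, rewrite the numerator using the classical Bayes relation, and then expand the denominator $P(\beta)$ using the total probability decomposition. The hypotheses that $P(\alpha)$, $P(\neg\alpha)$, $P((\alpha\lor\neg\alpha)\land\fstar\alpha)$ and $P(\beta)$ are all non-zero are precisely what guarantee that every conditional probability appearing in the statement is well-defined.

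First I would handle the numerator. By the definition of conditional probability, $P(\alpha\mid\beta)=P(\alpha\land\beta)/P(\beta)$. Using Equation (\ref{eqn:2}) (commutativity of $\land$ under $P$), $P(\alpha\land\beta)=P(\beta\land\alpha)$, and since $P(\alpha)\neq 0$, the definition of $P(\beta\mid\alpha)$ gives $P(\beta\land\alpha)=P(\beta\mid\alpha)\cdot P(\alpha)$. Hence the numerator of the target expression, namely $P(\beta\mid\alpha)\cdot P(\alpha)$, equals $P(\alpha\land\beta)$, so it only remains to show that the denominator of the target equals $P(\beta)$.

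Next I would expand the denominator. By Theorem \ref{thm:totalparacompleteprob},
\[
P(\beta)=P(\beta\land\alpha)+P(\beta\land\neg\alpha)+P(\beta\land\fstar\alpha)-P(\beta\land(\alpha\lor\neg\alpha)\land\fstar\alpha).
\]
Now I would rewrite each of the four summands as a product of a conditional probability and a prior, using commutativity under $P$ (Equation (\ref{eqn:2})) together with the definition of conditional probability. Concretely, $P(\beta\land\alpha)=P(\beta\mid\alpha)\cdot P(\alpha)$ (valid since $P(\alpha)\neq0$); $P(\beta\land\neg\alpha)=P(\beta\mid\neg\alpha)\cdot P(\neg\alpha)$ (valid since $P(\neg\alpha)\neq0$); $P(\beta\land\fstar\alpha)=P(\beta\mid\fstar\alpha)\cdot P(\fstar\alpha)$; and the subtracted term equals $P(\beta\mid((\alpha\lor\neg\alpha)\land\fstar\alpha))\cdot P((\alpha\lor\neg\alpha)\land\fstar\alpha)=K$ (valid since $P((\alpha\lor\neg\alpha)\land\fstar\alpha)\neq0$). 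Substituting these into the total probability expansion shows the denominator of the target expression is exactly $P(\beta)$, and dividing the numerator $P(\beta\mid\alpha)\cdot P(\alpha)=P(\alpha\land\beta)$ by $P(\beta)$ recovers $P(\alpha\mid\beta)$.

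The main subtlety, rather than any real obstacle, is a well-definedness bookkeeping point: for the factorization $P(\beta\land\fstar\alpha)=P(\beta\mid\fstar\alpha)\cdot P(\fstar\alpha)$ to make sense as written, one needs $P(\fstar\alpha)\neq0$ whenever one wishes to read off $P(\beta\mid\fstar\alpha)$. I would note that if $P(\fstar\alpha)=0$ then comparison (Remark \ref{rem:ParacompleteProbRules}(iv) and the comparison axiom) forces $P(\beta\land\fstar\alpha)=0$, so the corresponding product term vanishes consistently and the identity is unaffected; thus the stated hypotheses, together with this degenerate-case remark, suffice. Everything else is routine algebraic substitution and the reuse of the commutativity identities already established in the proof of Theorem \ref{thm:totalparacompleteprob}.
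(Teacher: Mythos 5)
Your proposal is correct and takes essentially the same approach as the paper, which gives no separate proof but simply states that the result follows from Theorem \ref{thm:totalparacompleteprob} and the definition of conditional probability --- exactly the derivation you spell out (numerator by the definition of $P(\alpha\mid\beta)$ plus commutativity under $P$, denominator by expanding $P(\beta)$ via total paracomplete probability and factoring each term as a conditional times a prior). One small refinement: your degenerate-case remark about $P(\fstar\alpha)=0$ is unnecessary and, as written, not quite rigorous (if $P(\fstar\alpha)=0$ then $P(\beta\mid\fstar\alpha)$ is undefined rather than contributing a vanishing term); in fact the stated hypotheses already exclude this case, since $(\alpha\lor\neg\alpha)\land\fstar\alpha\vdashmb\fstar\alpha$ by Axiom 5 and the Deduction theorem, so comparison gives $P(\fstar\alpha)\ge P((\alpha\lor\neg\alpha)\land\fstar\alpha)>0$.
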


\begin{remark}
    The paracomplete Bayes' rule is dual to the one obtained in \cite{bueno2016paraconsistent} for the paraconsistent case. However, the equation obtained here is not as compact as in there. This is, however, expected as undeterminedness leads to more possibilities.
\end{remark}

\begin{example}\label{exm:paracompleteprob}
    A \emph{low-sensitivity diagnostic test} is one that is not very effective at detecting a disease. Such tests often produce weak or inconclusive evidence, leading to indeterminate outcomes rather than clearly positive or negative results. Because of this, even after testing, diagnosis and management may remain uncertain. This situation is distinct from false positives or false negatives.

Suppose a population is being surveyed for a disease using a low-sensitivity test such that, if it yields a positive result, then chances that a  person truly has the disease is 36\%, and the chances are 90\% for not having the disease if the result is negative. Let the following be the abbreviations for the events.

\begin{itemize}
    \item $D$: the event that a person truly has the disease;
    \item $A$: the event that the test result is positive.
\end{itemize}
Then $\neg A$ denotes the event that the test result is negative. The assumptions above can now be translated as follows.
\begin{itemize}
    \item$P(D|A)=0.36$; and
    \item $P(\neg D|\neg A)=0.9$.
\end{itemize}
Suppose the test outcomes in the population are as follows.
\begin{itemize}
    \item $P(A)=0.4$; and 
    \item $P(\neg A)=0.55$.
\end{itemize}
Moreover, suppose 7\% of the test results are so weak (either positive or negative) that they are treated as negligible evidence. Let the overall prevalence of the disease in the population be 23\% and the probability that a person has the disease when the test result is negligible is 1.9\%. Thus, we have 
\begin{itemize}
    \item $P(D)=0.23$;
    \item $P((A\lor \neg A)\land \fstar A))=0.07$; and
    \item $P(D\land (A\lor\neg A)\land\fstar A)=0.019$.
\end{itemize}
Now, by finite additivity,
\[
1=P(A\lor\neg A\lor\fstar A)=P(A)+P(\neg A)+P(\fstar A)-P((A\lor\neg A)\land\fstar A)=0.4+0.55+P(\fstar A)-0.07.
\]
So, $P(\fstar A)=0.12$. Next, since the event $D$ is classical as a person either has the disease or not, 
\begin{itemize}
    \item $P(D\mid\neg A)=1-P(\neg D\mid \neg A)=1-0.9=0.1$; and 
    \item $P(\neg D\mid A)=1-P(D\mid A)=1-0.36=0.64$.
\end{itemize}
Then, by the theorem of total paracomplete probability (Theorem \ref{thm:totalparacompleteprob}), 
\[
\begin{array}{rcl}
     P(D)&=&P(D\land A)+P(D\land\neg A)+P(D\land\fstar A)-P(D\land(A\lor\neg A)\land\fstar A)\\
     \hbox{i.e., } P(D)&=&P(D\mid A)\cdot P(A)+P(D\mid\neg A)\cdot P(\neg A)+P(D\land\fstar A)-P(D\land(A\lor\neg A)\land\fstar A)\\
     \hbox{i.e., }0.23&=&0.36\cdot0.4+0.1\cdot0.55+P(D\land\fstar A)-0.019.
\end{array}
\]
So, $P(D\land\fstar A)=0.05$. Moreover, by the paracomplete Bayes' rule (Theorem \ref{thm:Bayes}), we have 
\[
P(A\mid D)=\dfrac{P(D\mid A)\cdot P(A)}{P(D)}=\dfrac{0.36\cdot0.4}{0.23}\approx  63\%.
\]
If instead of calculating $P(A\mid D)$ using the undetermined component, we use one of the following cases, then the values of $P(A\mid D)$ obtained are shown below. We know that $P(A)$ and $P(\neg A)$ add up to only 0.95 leaving a gap of 5\%.
\begin{enumerate}
\item  If we increase $P(A)$ by $5\%$ (increasing the weight of false positives), then $P(A)=0.45$ and $P(\neg A)=0.55$. By classical Bayes' rule, we obtain
\[
P(A\mid D)=\dfrac{P(D\mid A)\cdot P(A)}{P(D\mid A)\cdot P(A)+P(D\mid\neg A)\cdot P(\neg A)}=\dfrac{0.36\cdot0.45}{0.36\cdot0.45+0.1\cdot0.55}\approx75\%.
\]
\item If we increase $P(\neg A)$ by 5\% (increasing the weight of false negatives), then, $P(A)=0.4$ and $P(\neg A)=0.6$. Then, again by classical Bayes' rule, we have
\[
P(A\mid D)=\dfrac{0.36\cdot0.4}{0.36\cdot0.4+0.1\cdot0.6}\approx 71\%.
\]

\item If we raise both $P(A)$ and $P(\neg A)$ by 2.5\% (distributing the 5\% gap equally), then, $P(A)=0.425$ and $P(\neg A)=0.575$. In this case, we get
\[
P(A\mid D)=\dfrac{0.36\cdot0.425}{0.36\cdot0.425+0.1\cdot0.575}\approx73\%.
\]
\end{enumerate}
We observe that the accuracy of a less reliable test is better reflected in the form of a lower accuracy rate when calculated using the paracomplete Bayes's rule with the undetermined part taken into consideration.
\end{example}

\section{Paracomplete Probability Spaces}
In this final section we touch upon the discussion about probability on sets. A detailed history of the different competing approaches to probability theory can be found in \cite{bueno2016paraconsistent}. As mentioned there, the notion of probability on sets was introduced by Kolmogorov in his classic book in German, which has been translated to English as \cite{Kolmogorov1933}. This approach is connected to measure theory and is usually preferred by mathematicians, statisticians and engineers. However, the probability on sentences approach remains favored by logicians and philosophers.

The main goal here is to present a definition of a \emph{paracomplete probability space}. This can be seen as dual to the one in \cite{bueno2016paraconsistent} for the paraconsistent case and shows a mathematical and probabilistic interpretation of undeterminedness.

The classical probability space is defined using a \emph{$\sigma$-algebra} of sets. A \emph{probability measure} is then defined on the elements of the $\sigma$-algebra. The mathematical details are as follows.

Given a set $\Omega\neq\emptyset$, a \emph{$\sigma$-algebra} on $\Omega$ is a collection $\Sigma\subseteq\pow(\Omega)$ such that the following conditions are satisfied.
    \begin{enumerate}[label=(\roman*)]
        \item $\emptyset,\Omega\in\Sigma$.
        \item If $A\in\Sigma$, then $A^c\in\Sigma$ ($A^c$ denotes the complement of the set $A$).
        \item If $A_1,A_2,\ldots\in\Sigma$, then $\displaystyle\bigcup_{i\ge1}A_i\in\Sigma$, i.e., $\Sigma$ is closed under countable union. (Due to condition (ii) above, by De Morgan's law, this implies that $\Sigma$ is also closed under countable intersection.)
    \end{enumerate}
Then, a map $P:\Sigma\to[0,1]$ is called a \emph{probability measure} if it satisfies the following conditions.
\begin{enumerate}[label=(\roman*)]
    \item $P(\emptyset)=0$ and $P(\Omega)=1$.
    \item If $A_1,A_2,\ldots\in\Sigma$ are pairwise disjoint, then $P\left(\displaystyle\bigcup_{i\ge1}A_i\right)=\displaystyle\sum_{i\ge1}P(A_i)$.
\end{enumerate}
The structure $(\Omega,\Sigma,P)$ is then called a \emph{(classical) probability space}. $\Omega$ is referred to as the \emph{sample space} (the collection of all possible outcomes) and the elements of $\Sigma$ are called \emph{events}. The connection between probability on events and probability on sentences in the classical case is then established via algebraic means. The process is discussed briefly in \cite{bueno2016paraconsistent}.

It turns out that for the paracomplete situation, working with a $\sigma$-algebra is not tenable. More precisely, we cannot work with a collection of events that is closed under complements. This is expected as the classical negation, which corresponds to the operation of taking complements of sets, behaves differently from a paracomplete negation. We define below an alternative of a $\sigma$-algebra, which we call a \emph{$\sigma_p$-algebra}. 

\begin{definition}
    Suppose $\Omega$ is a nonempty set. A \emph{$\sigma_p$-algebra} on $\Omega$ is a collection $\Sigma\subseteq\pow(\Omega)$ such that the following conditions are satisfied.
    \begin{enumerate}[label=(\roman*)]
        \item $\emptyset,\Omega\in\Sigma$
        \item If $A_1,A_2\in\Sigma$, then $A_1\cap A_2\in\Sigma$, i.e., $\Sigma$ is closed under finite intersection.
        \item If $A_1,A_2,\ldots\in\Sigma$, then $\displaystyle\bigcup_{i\ge1}A_i\in\Sigma$, i.e., $\Sigma$ is closed under countable union.
        \item $^\odot:\Sigma\to\Sigma$ and $\diam:\Sigma\to\Sigma$ are two operations on $\Sigma$ such that, for each $A\in\Sigma$,
        \begin{enumerate}[label=(\alph*)]
        \item $A^\odot\cap A=\emptyset$, i.e., $A^\odot\subseteq A^c$, and
        \item $\diam A\cap A^c=A^c\setminus A^\odot$.
    \end{enumerate}
    \end{enumerate}
\end{definition}

The following example shows that a $\sigma_p$-algebra need not be a $\sigma$-algebra.

\begin{example}
    Let $\Omega=\{1,2\}$ and $\Sigma=\{\emptyset,\Omega,K\}$, where $
    K=\{1\}$. The maps $^\odot:\Sigma\to\Sigma$ and $\diam:\Sigma\to\Sigma$ are defined as follows. $A^\odot=\emptyset$ and $\diam A=\Omega$ for all $A\in\Sigma$. 

    Clearly, $\Sigma$ is a $\sigma_p$-algebra on $\Omega$. However, $K^c=\{2\}\notin\Sigma$. Hence, $\Sigma$ is not a $\sigma$-algebra on $\Omega$.
\end{example}

We can now define a \emph{paracomplete probability space} as follows. 

\begin{definition}
    A \emph{paracomplete probability space} is a structure $\langle\Omega,\Sigma,\Pi,P_{\mu}\rangle$, where 
    \begin{enumerate}[label=(\roman*)]
        \item $\Omega\neq\emptyset$ is the sample space composed of all possible outcomes; 
        \item $\Sigma\subseteq\pow(\Omega)$ is a $\sigma_p$-algebra of the set of events;
        \item $\Pi\in\Sigma$ is the set of all determined outcomes; and
        \item the map $P_{\mu}: \Sigma\limp [0,1]$ is a probability measure satisfying the following conditions.
        \begin{enumerate}
            \item $P_{\mu}(\Omega)=1$ and $P_{\mu}(\emptyset)=0$, and
            \item if $A_1,A_2,\ldots\in\Sigma$ are pairwise disjoint, then $P_\mu\left(\displaystyle\bigcup_{i\ge1}A_i\right)=\displaystyle\sum_{i\ge1}P_\mu(A_i)$.
        \end{enumerate}
    \end{enumerate}
\end{definition}

\begin{remark}
    In a paracomplete probability space $\langle\Omega,\Sigma,\Pi,P_{\mu}\rangle$, for any $A\in\Sigma$, $\diam A$ is the set of all outcomes in $\Omega$ for which evidence is missing. These typically will include some elements of $A$ and some of $A^c$. The elements in $A\setminus\diam A$ and $A^\odot$ are fully determined.

    A classical probability space $(\Omega,\Sigma, P)$ can now be seen as a special case of a paracomplete probability space $(\Omega,\Sigma,\Pi,P)$, where $\Pi=\Omega$, $\diam A=\emptyset$, and hence, $A^\odot=A^c$, for each $A\in\Sigma$. Thus, in this case, $\Sigma$ is closed under the operation of taking complements, and consequently, a $\sigma$-algebra on $\Omega$.
\end{remark}

The question of equivalence between probability on sentences and probability on sets in the paracomplete case requires further investigation involving random variables, measure theory and other tools. We leave that for the future.

\section{Conclusions and Future Directions}
In this article, we have investigated paracompleteness using a logic of formal underterminedness that we call $\mbst$. This logic is endowed with a paracomplete negation $\neg$, i.e., the law of excluded middle expressed in terms of this negation is no longer a tautology, and an undeterminedness operator $\fstar$. A Hilbert-style presentation and a two-valued semantics for $\mbst$ have been discussed, followed by the proofs of soundness and completeness.

We have next introduced a probability semantics for the LFU $\mbst$ and established the soundness and completeness results with respect to this semantics. Next, a theorem of total paracomplete probability and a paracomplete Bayes' rule have been established. While the classical counterparts of these can be seen as special cases when there is no undeterminedness, these are also dual to the results obtained in the paraconsistent case.

The final section of the article explores a possible definition of a paracomplete probability space, where probability is endowed on sets instead of individual sentences of a logic. The question of whether the probability on sentences and the probability on sets approaches are equivalent in the paracomplete case, however, remains unanswered here. This can be taken up in a future article.

Apart from the above question, one can also try to investigate in similar lines as in here using different LFUs. One could also extend this work to the investigation of paracomplete possibility theory as in the case of the work in the paraconsistent case.

\bibliographystyle{splncs04}
\bibliography{pp}

\end{document}